\newcommand{\asdim}{\ensuremath{\variablestyle{asdim}}}
\newcommand{\andim}{\ensuremath{\variablestyle{ANdim}}}
\newcommand{\dsg}{$d$-sphere graph}
\newcommand{\BS}{\ensuremath{\mathbb{S}}}
\newcommand{\cc}{\ensuremath{\mathcal{C}}}
\newcommand{\ch}{\ensuremath{\mathcal{H}}}
\title{Strongly sublinear separators and bounded asymptotic dimension for sphere intersection graphs}
\date{}
\DeclareRobustCommand{\authorthing}{
    \begin{center}
        James Davies\thanks{Trinity Hall, University of Cambridge, United Kingdom, 
        \href{mailto:jgd37@cam.ac.uk}{jgd37@cam.ac.uk}} \hspace{1cm}
        Agelos Georgakopoulos\thanks{University of Warwick, United Kingdom, \href{mailto:a.georgakopoulos@warwick.ac.uk}{a.georgakopoulos@warwick.ac.uk}. Supported by EPSRC grants EP/V048821/1 and EP/V009044/1.} \hspace{1cm}
	    Meike Hatzel\thanks{Discrete Mathematics Group, Institute for Basic Science (IBS), Daejeon, South Korea, \href{mailto:research@meikehatzel.com}{research@meikehatzel.com}. The research was supported by the Institute for Basic Science (IBS-R029-C1).} \hspace{1cm}
        Rose McCarty\thanks{Georgia Institute of Technology, United States, \href{mailto:rmccarty3@gatech.edu}{rmccarty3@gatech.edu}. Supported by the National Science Foundation under Grant No.~DMS-2202961.}
\end{center}}
\author{\authorthing}
\newif\ifcomment
\renewcommand{\epsilon}{\varepsilon}
\renewcommand{\hat}{\widehat}
\renewcommand{\phi}{\varphi}
\begin{document}

\maketitle

\begin{abstract}
    In this paper, we consider the class $\SphereInt{d}$ of sphere intersection graphs in $\R^d$ for $d \geq 2$.
    We show that for each integer $t$, the class of all graphs in $\SphereInt{d}$ that exclude $K_{t,t}$ as a subgraph has strongly sublinear separators.
    We also prove that $\SphereInt{d}$ has asymptotic dimension at most $2d+2$.
\end{abstract}

\section{Introduction} 
We write
$\SphereInt{d}$ for the class of intersection graphs of spheres in $\R^d$ for $d \geq 2$.
The \emph{sphere dimension} of a graph $G$ is the smallest integer so that $G$ is 
contained in $\SphereInt{d}$.
This notion of dimension is analogous to the \emph{boxicity}, \emph{cubicity}, or \emph{sphericity}\footnote{We note that sphericity is more restrictive than sphere dimension as it only considers unit spheres.} of a graph.
These ``dimensionality'' parameters were introduced by Roberts~\cite{Roberts}, further studied by Maehara~\cite{Maehara}, and have since become a topic of much research; see for instance~\cite{ChandranFS10, Esperet16, ScottWood20, Trotter88}. We formulate an equivalent definition of $\SphereInt{d}$ utilising an appealing application of stereographic projection, allowing us to think of $\SphereInt{d}$ as a higher-dimensional generalisation of the classical class of circle graphs. (See~\cite{BouchetCircleChar, circleMultInterval, NajiCircle} for some classical work on circle graphs and~\cite{circleSurvey, McCarty} for surveys.) It will, therefore, become natural to let $\SphereInt{1}$ denote the class of circle graphs, which then have sphere dimension 1. Thus, $\SphereInt{d}$ simultaneously generalises various well-studied graph classes.

Of particular note, graphs with low dimension often satisfy strong separator theorems along the lines of Lipton and Tarjan's~\cite{LiptonTarjan79} famous separator theorem for planar graphs. Formally, a \emph{balanced separator} in an $n$-vertex graph $G$ is a set $S\subseteq V(G)$ so that each component of $G-S$ has at most $\frac{2}{3}n$ many vertices\footnote{The exact constant does not matter much; we could replace $\frac{2}{3}$ by any constant $c \in (0,1)$.}. Lipton and Tarjan showed that every $n$-vertex planar graph has a balanced separator of size $\mathcal{O}(\sqrt{n})$. These separator theorems have many applications 
as they can be combined with divide-and-conquer techniques~\cite{Chung1988, ETHtightEuclideanTSP, LiptonTarjan1980App, SmithWormald}.

Planar graphs are intersection graphs of internally disjoint circles by the Circle Packing Theorem of Andreev~\cite{Andreev}, Koebe~\cite{Koebe}, and Thurston~\cite{Thurston}. So Miller, Teng, Thurston, and Vavasis~\cite{MiTeThVaGeo} generalised the planar separator theorem by proving that the intersection graph of any sphere packing of size $n$ in $\R^d$ has a balanced separator of size $\mathcal{O}(n^{1-\frac{1}{d}})$. In fact, their results hold for any collection of balls of bounded ply; the \emph{ply} of a collection of geometric objects is the maximum number of objects with a non-empty common intersection. The ply of a collection of balls in $\R^d$ is closely related to the clique number of its intersection graph.

Many other geometric generalisations of Lipton and Tarjan's theorem have been proven, including some very recently~\cite{cliqueBasedSeps, cliqueBasedSepsStronger, Dvorak2021SSSconvexshapes, HarPeledQuanrud, separatorHyperbolic, SmithWormald}. Some of these results, such as~\cite{cliqueBasedSeps, cliqueBasedSepsStronger, separatorHyperbolic}, consider relaxations of separators and allow for more general geometric objects that way. However, if we keep the original notion of balanced separators, then two main improvements have been made. First, instead of considering balls, we can consider any compact convex subsets of $\R^d$ of bounded aspect ratio (the ratio of their diameter to their height) by~\cite[Lemma~2.21]{HarPeledQuanrud}. Second, it is enough that for any pair of objects under consideration, we can ``move one of them around inside the other to trace the boundary''; see~\cite[Theorem~2]{Dvorak2021SSSconvexshapes}.

All the results discussed above are for convex subsets of $\R^d$. We prove a new separator theorem for the intersection graphs of spheres in $\R^d$. Instead of bounding the ply, there is a different natural obstruction which we must bound the size of: 
the class of all complete bipartite graphs has bounded sphere dimension, but does not admit small balanced separators. 
Our first result is that this is the only obstruction. For any integers $t,d \geq 2$, we write $\SphereInt{d}_t$ for the class of all graphs that have sphere dimension at most $d$ and do not contain the complete bipartite graph $K_{t,t}$ as a subgraph. (This subgraph  need not be induced.)

\begin{restatable}{theorem}{StronglySublinearSeparatorsTheorem}
    \label{thm seps}
    For any integers $t,d \geq 2$, the class $\SphereInt{d}_t$ has strongly sublinear separators.
    More precisely, every $n$-vertex graph $G \in \SphereInt{d}_t$ has a balanced separator of size $\bigOlogd{t^{10}n^{1-\frac{1}{2d+8}}}$.
\end{restatable}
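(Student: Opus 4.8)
The plan is to pass to the stereographic picture and build the separator by slicing $S^d$ with a carefully chosen spherical cap. Using the reformulation of $\SphereInt{d}$ described above, present $G\in\SphereInt{d}_t$ as the intersection graph of a family $\mathcal F=\{\Sigma_v : v\in V(G)\}$ of $(d-1)$-spheres on the sphere $S^d\subseteq\R^{d+1}$, with $\Sigma_u,\Sigma_v$ adjacent exactly when they cross (equivalently, viewing each $\Sigma_v$ as the boundary of a flat $d$-dimensional disk $D_v\subseteq B^{d+1}$, when $D_u\cap D_v\neq\emptyset$); by hypothesis $\mathcal F$ contains no $K_{t,t}$ subgraph, and we may assume it is in general position. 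Each $\Sigma_v$ bounds two caps; let $r_v\in(0,\pi/2]$ be the angular radius of the smaller one. For a cap $K\subseteq S^d$ let $X_K=\{v : \Sigma_v\cap\partial K\neq\emptyset\}$. If $v\notin X_K$ then $\Sigma_v$ lies strictly inside or strictly outside $K$, and two of the $(d-1)$-spheres lying on opposite sides of $\partial K$ cannot cross; hence $X_K$ is a separator of $G$, and it is balanced provided a constant fraction of $\mathcal F$ lies inside $K$ and a constant fraction outside. So it suffices to find a cap $K$ that is balanced in this sense (or, failing that, one that peels off a constant-fraction unbalanced chunk, so that the recursion terminates after $\mathcal O(\log n)$ rounds) and for which $|X_K|$ meets the claimed bound.

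First I would dispose of the "near-great" spheres, those with $r_v$ within a small constant of $\pi/2$: in general position any two of these cross, so by $K_{t,t}$-freeness there are at most $\mathcal O(t)$ of them, and they can all be thrown into the separator. The remaining spheres I would sort into $\mathcal O(\log n)$ dyadic scales according to $r_v$, handling the spheres that are so tiny they behave essentially like points — of which, again by $K_{t,t}$-freeness, only $\mathcal O(t)$ can meet any small region — by a direct argument. Within a single scale I would choose $K$ at random from a distribution of caps of the matching radius, and for the balance requirement I would exploit that $\SphereInt{d}$ is invariant under Möbius transformations of $S^d$: first apply a Möbius map placing the spheres of that scale into a "spread-out" position, after which a centrepoint argument in $\R^{d+1}$ applied to representative points of those spheres guarantees that a suitably chosen cap has a constant fraction of them strictly inside and a constant fraction strictly outside, while cuts at one scale are unlikely to meet the (clustered, hence small) sphere-sets of the other scales.

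The crux, and the step I expect to be the main obstacle, is the crossing estimate: in a $K_{t,t}$-free family of equal-scale $(d-1)$-spheres on $S^d$ placed in spread-out position, a random cap boundary of the matching scale must meet only sublinearly many of them. Since $K_{t,t}$-freeness bounds the clique number, every point of $S^d$ has bounded local ply — but grid-like configurations of thin objects show that ply alone is insufficient for sublinear separators, so the estimate must use $K_{t,t}$-freeness in full strength. I would establish it through a K\H{o}v\'ari--S\'os--Tur\'an-type incidence bound for bipartite graphs one side of which is the set system of spherical caps — a system of bounded VC dimension, on the order of $d$ — together with a cutting of the corresponding cap arrangement; this is what forces the relevant crossing bipartite structure to be "low-density". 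Summed over the $\mathcal O(\log n)$ scales, and after the recursive boosting needed to turn a separator that is balanced only with respect to a single scale (i.e.\ with respect to a weighting of $V(G)$) into a genuinely balanced separator of $G$, the exponent degrades to $1-\frac{1}{2d+8}$ and the $\log^{\mathcal O(d)} n$ and $t^{10}$ factors are incurred, coming respectively from the accumulation over scales and recursion levels and from the VC/packing estimates.
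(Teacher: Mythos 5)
There is a genuine gap, and it sits exactly where you predicted: the ``crossing estimate'' is not proved, and in the form you state it, it is false. $K_{t,t}$-freeness constrains spheres that pairwise \emph{intersect}, but it says nothing about how many pairwise \emph{disjoint} spheres a single cap boundary can cross. Take $n$ nearly concentric $(d-1)$-spheres with radii in $[\rho,2\rho]$: this family is edgeless, hence trivially $K_{t,t}$-free, all its members live at one dyadic scale, yet any cap of comparable radius whose centre sits at distance roughly $\rho$ to $2\rho$ from the common centre crosses essentially all of them, and the caps whose boundaries thread the thin annuli between consecutive spheres form a vanishingly small fraction of the matching-scale caps. So a random cap at the matching scale does not meet only sublinearly many spheres of a $K_{t,t}$-free equal-scale family, and neither the bounded point-ply observation, nor a M\"obius ``spread-out'' normalisation (which controls balance, not crossings), nor bounded VC dimension of caps with a KST/cutting argument supplies the missing control: the spheres met by one cap boundary need not contain any bipartite crossing structure at all, so there is nothing for a K\H{o}v\'ari--S\'os--Tur\'an-type bound to bite on. The obstruction is nesting, and handling nesting is precisely the heart of the theorem.

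For comparison, the paper does not attempt a geometric cut at all. It shows that a vertex-minimal graph in $\SphereInt{d}_t$ containing an $r$-shallow $K_h$-minor admits no large nested family in any sphere representation (using the minor model to build many short disjoint paths from the innermost to the outermost sphere of a putative nest, which a counting argument converts into a $K_{t,t}$); bounded nesting plus $K_{t,t}$-freeness bounds the ply of the corresponding balls via a chain/antichain (Dilworth-type) argument; bounded ply bounds the strong colouring number, which in turn is large whenever a shallow clique minor is present. This excludes $K_h$ as an $r$-shallow minor for $h$ polynomial in $r$, and the separator then comes from the Plotkin--Rao--Smith theorem with $r=n^{1/(2d+8)}$, which is where the exponent $1-\frac{1}{2d+8}$, the $t^{10}$, and the logarithmic factor actually arise. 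If you want to salvage a direct geometric approach in the spirit of Miller--Teng--Thurston--Vavasis, you would need an explicit mechanism (like the paper's minimality-plus-nesting lemma) that either eliminates or charges for deep nests before any random-cap or centrepoint argument can be run; as written, your proposal assumes away the one configuration the hypothesis does not forbid.
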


To the best of our knowledge, this is the first high-dimensional separator theorem that applies to classes that contain large induced complete bipartite graphs. In two dimensions, Lee~\cite{LeeSeparators} proved that every string graph with $m$ edges has a balanced separator of size $\mathcal{O}(\sqrt{m})$. This implies a separator theorem for string graphs with a forbidden $K_{t,t}$-subgraph by the K\H{o}v\'{a}ri-S\'{o}s-Tur\'{a}n Theorem~\cite{KST}. This theorem of Lee was conjectured by Fox and Pach~\cite{foxPach2010}, and a slightly weaker version with an extra $\log(m)$ factor was proven by Matou\v{s}ek~\cite{MatousekSeparators}. As far as we know, the same bound could hold for graphs with sphere dimension at most $d$; maybe every graph with $m$ edges and sphere dimension $d$ has a balanced separator of size $\mathcal{O}_d(\sqrt{m})$. If so, a theorem of Fox, Pach, Sheffer, Suk, and Zahl~\cite{SemiAlgebraic} about semi-algebraic graphs could then be used to obtain a strong separator theorem for $\SphereInt{d}_t$.

\medskip
Next, we turn our attention to the asymptotic dimension of our graphs. Gromov~\cite{GroAsyInv} introduced the notion of asymptotic dimension of a metric space as a coarse version of the classical topological dimension. This was part of his coarse-geometric approach, which has tremendously impacted geometric group theory. For instance, Yu~\cite{YuNov} showed that (any Cayley graph of) every finitely generated group $G$ with finite asymptotic dimension embeds coarsely into Hilbert space, and therefore $G$ satisfies a seminal conjecture of Novikov. Roughly speaking, asymptotic dimension corresponds to the number of colours needed to colour a graph, or family of graphs, with certain restrictions on the diameters and spacing of monochromatic components; the precise definition is given in \cref{sec:prelim}.

Although asymptotic dimension was originally introduced to study spaces of infinite diameter, the notion can be applied to classes of finite graphs, and it is gaining popularity in this setting \cite{bonamy2023asymptotic,DisPro,Dvorak2021SSSconvexshapes,FujPapAsy,GeoPapMin,JorLanGeo,LiuAss}. This popularity is partly motivated by a connection to sparse partition schemes in algorithm development, as we discuss in \cref{sec probs}. It is also motivated by connections to graph colouring, as discussed in~\cite{WeakDiamColoringSurfaces}.

Thus, understanding how the asymptotic dimension of a graph class interacts with other geometric properties is important. In this spirit, Bonamy et al.~\cite{bonamy2023asymptotic} proved that planar graphs have asymptotic dimension at most 2, and asked if every class with strongly sublinear separators has bounded asymptotic dimension~\cite{bonamy2023asymptotic}. (We note that this question was phrased in terms of classes of polynomial expansion; however, a class has polynomial expansion if and only if it has strongly sublinear separators~\cite{2016sssPolyExp, PRS}. Formally, a class has \emph{strongly sublinear separators} if there exists $\delta>0$ so that every $n$-vertex graph that is an induced subgraph of some graph in the class admits a balanced separator of size $\mathcal{O}(n^{1-\delta})$.) We prove that 
the answer is positive for graphs of bounded sphere dimension.

\begin{theorem}
For any integer $d\geq 1$, the class of graphs of sphere dimension at most $d$ has asymptotic dimension at most $2d+2$.
\end{theorem}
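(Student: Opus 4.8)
Recall the standard reformulation of asymptotic dimension for graph classes (see \cref{sec:prelim}): $\asdim(\mathcal{G})\le k$ holds if and only if for every $r\ge 1$ there is $D=D(r)$ such that every $G\in\mathcal{G}$ admits a colouring of $V(G)$ with $k+1$ colours in which every colour class is a disjoint union of sets of weak diameter at most $D$ in $G$ that are pairwise at distance more than $r$ in $G$. So, fixing $r$, the goal is to produce such a colouring of an arbitrary $G\in\SphereInt{d}$ using $2d+3$ colours, with $D$ depending only on $r$ and $d$. The first move is to pass to the stereographic reformulation developed earlier in the paper: we may assume $G$ is the crossing graph of a family $\{C_v:v\in V(G)\}$ of spherical caps on $\BS^{d}$, where $uv\in E(G)$ exactly when $C_u$ and $C_v$ properly overlap (equivalently, identifying $\BS^{d}$ with the ideal boundary of $\mathbb{H}^{d+1}$, $G$ is the crossing graph of a family of totally geodesic hyperplanes in hyperbolic $(d{+}1)$-space). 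Note that, unlike \cref{thm seps}, we cannot route this through separators: whether strongly sublinear separators imply bounded asymptotic dimension is open, and in any case $\SphereInt{d}$ itself does not have sublinear separators, so the argument must be genuinely geometric.

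The plan for the colouring is a multi-scale (shifted dyadic / quadtree-type) decomposition of $\BS^{d}$. Normalise each cap to angular radius at most $\pi/2$ (replacing a cap by its complement does not change the crossing relation), and assign to $C_v$ a \emph{scale} $s_v$ with $2^{-s_v}$ comparable to its radius. Fix a family of $O(d)$ mutually shifted nested partitions of $\BS^{d}$ so that, for every cap, a bounded dilate of it lies inside a single cell of one of these partitions at a level comparable to its scale; assign $C_v$ to such a cell $Q(v)$. One then groups vertices into \emph{pieces} indexed by (cell, scale), checks that each piece has bounded weak diameter in $G$ (caps of a common scale with nearby centres pairwise cross, or are joined by very short chains), and colours the pieces so that the colour of a piece is a bounded function of the "spatial'' coordinate (its cell's position modulo the grid, giving the $\sim d$-dimensional part via the usual $\asdim(\R^{d})$ argument) and of a "scale'' coordinate. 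A BFS-layering of $G$ can additionally be used to tame diameters globally at the cost of one extra colour.

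The \emph{main difficulty}, and the source of the constant $2d+2$ rather than $d+1=\asdim(\mathbb{H}^{d+1})$, is the interaction between caps of wildly different sizes. A single crossing can relate a tiny cap to an enormous one (the tiny cap straddling the huge cap's boundary), so the two caps' natural positions in the scale hierarchy are arbitrarily far apart; worse, a bounded-length chain of crossings need not stay in a bounded neighbourhood of the hierarchy, since a deeply nested pair of caps can be bridged by a single well-placed intermediate cap. Consequently a naive "scale modulo $M$'' colouring fails for every $M$, and one must instead set up the shifted multi-scale decomposition so that it is coarsely faithful to the crossing metric — this compatibility, together with the fact that the edge relation is proper overlap (four non-empty regions) rather than mere intersection, is what forces one to control both a spatial coordinate on $\BS^{d}$ and an independent scale coordinate, and is the technical heart of the argument. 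An alternative framing of the same difficulty, which may streamline the bookkeeping, is to view the hyperplanes of $\mathbb{H}^{d+1}$ as the walls of a wallspace and exploit $\delta$-hyperbolicity of $\mathbb{H}^{d+1}$ to control how chains of crossings can travel.

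Finally, the hypothesis of $K_{t,t}$-freeness, central to \cref{thm seps}, plays no role here: complete bipartite subgraphs have diameter at most $2$, and asymptotic dimension is insensitive to bounded-diameter phenomena, so the bound $2d+2$ holds for all of $\SphereInt{d}$, not merely for $\SphereInt{d}_t$.
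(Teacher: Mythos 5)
Your plan stops short of a proof: the step you yourself call the technical heart is never carried out, and one of the intermediate claims is false as stated. Concretely, the assertion that each piece indexed by (cell, scale) has bounded weak diameter in $G$ because ``caps of a common scale with nearby centres pairwise cross, or are joined by very short chains'' does not hold: two caps of nearly equal radius and nearly equal centre need not cross at all (one boundary sphere can lie strictly inside the other), and a family of nearly concentric spheres of nearly equal radii is an independent set in $G$ whose members may be pairwise far apart in the graph metric, or even in different components, while all landing in the same piece of your decomposition. Some mechanism is needed to force such nested, non-crossing configurations to be close in $G$ before any (cell, scale) grouping can have bounded weak diameter, and nothing in the proposal supplies it. Likewise, the promised compatibility of the shifted multi-scale decomposition with the crossing metric --- which you correctly identify as the crux, since a single crossing or a short chain of crossings can jump arbitrarily far in the scale hierarchy --- is named but not constructed, and no count is given showing that the spatial and scale coordinates together really need only $2d+3$ colours with $D$ depending only on $r$ and $d$. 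As written, this is a programme whose hardest lemmas are missing, not an argument.

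For comparison, the paper takes a quite different and more economical route that sidesteps the multi-scale bookkeeping entirely. It works with the more general class of intersection graphs of path-connected sets obtained from compact convex sets of bounded aspect ratio in $\R^d$ by hollowing out part of the interior (spheres being the special case), fixes a maximal ``pivot'' vertex and BFS-layers the graph from it, and adds weight-$2$ edges between same-layer pairs related by containment; \cref{lem:dequal} shows this does not change the metric, precisely because connectivity to the pivot forces a contained pair in a common layer to be at distance $2$ --- this is the ingredient your pieces lack. Each layer is then shown (\cref{lem:layerable}) to be quasi-isometric to the intersection graph of the convex hulls of its maximal members, a convex-body intersection graph of bounded aspect ratio, so the Dvo\v{r}\'ak--Norin bound of $2d+1$ (\cref{thm:convexasdim}) applies off the shelf, and the layering theorem of Bonamy et al.\ (\cref{thm:layer}) contributes the final $+1$. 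In particular the constant $2d+2$ does not arise from any scale-interaction phenomenon, but simply as $(2d+1)+1$ from these two black boxes. A genuinely geometric multi-scale argument in the spirit of your sketch may well exist --- the paper notes that Liu and Norin obtained $d+1$ by other methods --- but it would require proving exactly the compatibility statements you have deferred.
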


This theorem generalises and also uses a theorem of Dvo{\v{r}}{\'a}k and Norin~\cite{dvovrak2022asymptotic}, at the cost of increasing the dimension by 1. In fact, we prove a more general result by considering intersection graphs of shapes other than spheres (\cref{thm asdim}).

\paragraph*{Paper structure.}
We introduce some preliminaries in \cref{sec:prelim}.
In~\cref{sec SG}, we discuss basic properties of sphere dimension, particularly the connection to circle graphs. 
In~\cref{sec:SSS}, we prove our main result that $\SphereInt{d}_t$ has strongly sublinear separators.
We prove that $\SphereInt{d}$ has bounded asymptotic dimension in \cref{sec:asymp_dim}.
We conclude with some open problems in~\cref{sec probs}.

\section{Preliminaries}
\label{sec:prelim}

All graphs in this paper can be assumed to be finite and simple, although some of our results extend verbatim to the infinite case. 
We use $\bigOlogd{}$ for the version of big-$\bigO{}$ notation where we view $d$ as being fixed and suppress polylogarithmic factors. (In fact it will be clear from our proof of \cref{thm seps} that we only suppress a single $\log{n}$ factor and a constant factor of the form $c^d$ for $c$ a universal constant.)

\paragraph*{Geometric intersection graphs.}
Let $\mathcal{G}$ be a class of geometric objects in a given space, for example, unit squares in the plane. Then an \emph{intersection graph of $\mathcal{G}$} is a graph $G$ whose vertices can be identified with objects in $\mathcal{G}$ such that there is an edge between two vertices in $G$ if and only if the two corresponding objects intersect. Similarly, we write $G(\mathcal{G})$ for the graph with vertex-set $\mathcal{G}$ where two vertices are adjacent if they have a non-empty intersection. The \emph{ply} of $\mathcal{G}$ is the maximum size of a set of objects in $\mathcal{G}$ with a non-empty common intersection. 

\paragraph*{Shallow minors.}
Our proof of \cref{thm seps} relies on a theorem of Plotkin, Rao, and Smith~\cite{PRS} and Dvo{\v{r}}{\'a}k and Norin~\cite{2016sssPolyExp} which characterises classes that admit strongly sublinear separators in terms of forbidden shallow minors. We give some relevant definitions.

Given a positive integer $r$, we say that a graph $H$ is an \emph{$r$-shallow minor} of a graph $G$ if there is a function $\phi: \V{H} \to 2^{\V{G}}$ such that:
\begin{itemize}
    \item for every $x \in V(H)$, the subgraph of $G$ induced by $\phi(x)$ contains a rooted spanning tree of height at most $r$,
    \item for all distinct $x,y \in V(H)$, the sets $\phi(x)$ and $\phi(y)$ are disjoint, and
    \item for every $xy \in \E{H}$, there is an edge in $G$ with one end in $\phi(x)$ and the other in $\phi(y)$.
\end{itemize}
We call $\phi$ an \emph{$r$-shallow minor model} of $H$ in $G$. For $v \in V(H)$, we call $\phi(v)$ the \emph{bag of $v$}. 

\paragraph*{Strongly sublinear separators.}
Recall that a \emph{balanced separator} in an $n$-vertex graph $G$ is a set $S \subseteq V(G)$ so that each component of $G-S$ has at most $\frac{2}{3}n$ vertices.
A graph class $\mathcal{C}$ has \emph{strongly sublinear separators} if there exist $\epsilon>0$ so that for any $n$-vertex graph $H$ which is an induced subgraph of a graph in $\mathcal{C}$, the graph $H$ has a balanced separator of size $\bigO{n^{1-\epsilon}}$. (The big-$\bigO{}$ notation can hide a constant factor that depends on $\mathcal{C}$.)
Note that to obtain strongly sublinear separators, it suffices to show that there exists $\delta>0$ so that there are balanced separators of size $\bigOlog{n^{1-\delta}}$.
In order to provide this, we show that there exists $\delta>0$ so that there are separators of size $\bigOlog{n^{1-\delta}}$.
Plotkin, Rao, and Smith~\cite{PRS} proved that any graph with a forbidden shallow minor has a small balanced separator.

\begin{theorem}[Plotkin, Rao, and Smith~\cite{PRS}]
    \label{thm:PRS}
    For any $r,h \in \N$, any $n$-vertex graph without an $r$-shallow $K_h$ minor has a balanced separator of size $\PRSfunction{r}{h}$.
\end{theorem}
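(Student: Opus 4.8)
The plan is to establish the contrapositive: I will show that if an $n$-vertex graph $G$ has no balanced separator of size at most $\PRSfunction{r}{h}$, then $G$ contains $K_h$ as an $r$-shallow minor. A standard reduction lets us assume $G$ is connected, since otherwise either some component of $G$ has more than $\tfrac{2}{3}n$ vertices (in which case we pass to it; a balanced separator of that component is also one of $G$) or the empty set is already a balanced separator of $G$.

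I would then run a breadth-first search from an arbitrary root, writing $L_i$ for the $i$-th layer and $B_i := L_0 \cup \cdots \cup L_i$ for the ball of radius $i$. The elementary observation driving the whole argument is that a single layer $L_i$ is a balanced separator whenever $|B_{i-1}| \le \tfrac{2}{3}n$ and $|V(G) \setminus B_i| \le \tfrac{2}{3}n$, since every component of $G - L_i$ lies entirely inside $B_{i-1}$ or entirely outside $B_i$; and both conditions hold simultaneously for every index $i$ in some interval $I$ in the middle of the BFS, an interval over whose layers at least $\tfrac{1}{3}n$ vertices are distributed. The contradiction assumption thus forces $|L_i| > \PRSfunction{r}{h}$ for all $i \in I$, so $I$ consists of fewer than $n/\PRSfunction{r}{h}$ layers while remaining heavy; restricting to a suitable sub-window of at most $r$ consecutive layers I extract a connected piece $H$ of $G$ of radius $O(r)$ that still carries a polynomial fraction of $V(G)$, together with the bookkeeping that every vertex of $H$ reaches the two bounding layers of the window by paths of length at most $r$ staying inside the window.

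Inside $H$ I would build the $K_h$-minor by region growing. Repeatedly carve off a ball of radius at most $r$: start a BFS from an as-yet-unused vertex of $H$ and grow it layer by layer; if at some radius at most $r$ its bounding sphere were small relative to the ball, that sphere together with the window's two bounding layers would be a balanced separator of $G$, a contradiction, so each ball expands geometrically and hence reaches a controlled size within radius $r$. Continuing, one either exhausts $H$ into few such balls or accumulates $h$ disjoint radius-at-most-$r$ balls that are pairwise joined by short paths; routing those paths through the window's bounding layers (which every vertex reaches quickly) and discarding everything else yields $K_h$ as an $r$-shallow minor of $G$, completing the contrapositive.

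The step I expect to be the main obstacle is the quantitative bookkeeping that makes all the parameters fit together: one must choose the target separator size $\PRSfunction{r}{h}$ and the ball-expansion threshold so that (a) any failure to complete the minor genuinely leaves a separator of size only $\PRSfunction{r}{h}$ — the accumulated bounding spheres plus the $O(1)$ boundary layers of the window — and (b) any success produces branch sets of radius honestly at most $r$ (rather than $r$ times a logarithm) with all $\binom{h}{2}$ required adjacencies present. Balancing the number of region-growing rounds against the mass captured and the boundary paid per round is precisely what forces the $\sqrt{n}$ factor in $\PRSfunction{r}{h}$, and tracking how this trade-off depends on $h$ and on $r$ is the delicate part of the proof.
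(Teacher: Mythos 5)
First, a point of context: the paper does not prove this theorem at all --- it is quoted as a black box from Plotkin, Rao, and Smith, so there is no in-paper argument to compare with; what follows is an assessment of your sketch on its own terms. The first genuine gap is the ``contradiction'' driving your region-growing step. The hypothesis (no balanced separator of size $\PRSfunction{r}{h}$) only tells you that the \emph{middle} BFS layers are \emph{large}; it gives no upper bound whatsoever on the two layers bounding your window, which can each have up to $n$ vertices. So ``the bounding sphere of the ball together with the window's two bounding layers'' is not a set of size $\PRSfunction{r}{h}$, and it need not be balanced either: the part of the band you have not yet carved off can still contain more than $\frac{2}{3}n$ vertices, since the band may hold almost all of $G$. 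Hence no contradiction arises, the claimed geometric expansion of the balls does not follow, and neither does the ``controlled size within radius $r$.'' (Smaller issues in the same step: the union of at most $r$ consecutive BFS layers need not be connected, and its components need not have radius $\mathcal{O}(r)$ in $G$ --- two vertices of the same layer may only be joined quickly through ancestors outside the band; also a vertex of the band need not reach the \emph{upper} bounding layer by a short path inside the window.)

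The second, more fundamental gap is the endgame. An $r$-shallow $K_h$-minor needs $h$ pairwise \emph{disjoint} bags, each with a spanning tree of height at most $r$, and an edge between every pair of bags. Having $h$ disjoint radius-$r$ balls that are ``pairwise joined by short paths'' is not enough: the roughly $h^2/2$ connecting paths must be assigned to the bags so that the bags remain pairwise disjoint and their radii stay at most $r$. Routing all of them through the window's two bounding layers forces the connectors of different pairs to share vertices, and contracting overlapping connectors does not yield $K_h$. Arranging this disjointness and all-pairs adjacency is precisely the core of the Plotkin--Rao--Smith argument: they grow the bags one at a time, each as a union of at most $h$ short paths inside the so-far-unused part of the graph so that the new bag meets a neighbour of every previously built bag, and whenever some required short path fails to exist they extract a small BFS layer as a piece of the separator and recurse (this recursion, together with the depth bookkeeping, is where the $\log n$ and the trade-off between the two terms of the bound come from). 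None of this mechanism is present in your sketch, so as written it does not establish the theorem; since the paper only needs the statement as a black box, citing \cite{PRS} is the appropriate route unless you reconstruct that bag-by-bag argument in full.
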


We apply this theorem with $r$ being a small power of $n$ and $h$ being a polynomial in $r$.
With this choice of parameters, \cref{thm:PRS} yields a sufficiently small balanced separator.

\paragraph*{Strong colouring number.}
Let $G$ be a graph, $r \in \N$, and $\leq$ be an ordering of the vertices of $G$.
For a vertex $v \in \V{G}$, we call a vertex $u \in \V{G}$ \emph{strongly $r$-reachable from $v$} if $u \leq v$ and there exists a $v$-$u$-path $P$ of length at most $r$ in $G$ such that $u$ is the only vertex in $P$ that is smaller than $v$ with respect to $\leq$.
The \emph{$r$-width} of $\leq$ is the maximum over all $v \in V(G)$ of the number of vertices which are strongly $r$-reachable from $v$.
Finally, the \emph{$r$-strong colouring number} of a graph $G$, denoted $\scol{r}{G}$, is the minimum $r$-width over all possible vertex orderings of $G$.
We require the observation that intersection graphs of balls of bounded ply have bounded strong colouring numbers.

\begin{lemma}[{\cite[Lemma 1]{2022plyballscol}}]
    \label{lem:ball-graphs-ply-scol}
    Let $\mathcal{B}$ be a finite collection of balls in $\R^d$ of ply at most $t$. Then the intersection graph $G(\mathcal{B})$ has $r$-strong colouring number at most $t(2r+2)^d$.
\end{lemma}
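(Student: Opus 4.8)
The plan is to produce a single vertex ordering of $G(\mathcal{B})$ whose $r$-width is at most $t(2r+2)^d$; the ordering is simply the one in which balls appear in \emph{non-increasing order of radius} (ties broken arbitrarily). Write $\le$ for this ordering and, for a ball $b\in\mathcal{B}$, let $c_b$ and $\rho_b$ be its centre and radius, and $B_b$ the ball itself. Fixing a ball $v$, I would bound the number $N$ of balls strongly $r$-reachable from $v$ under $\le$.

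So let $u$ be strongly $r$-reachable from $v$, witnessed by a path $v=w_0,w_1,\dots,w_k=u$ with $k\le r$ all of whose internal vertices $w_1,\dots,w_{k-1}$ come after $v$ in $\le$; by the choice of ordering this forces $\rho_{w_i}\le\rho_v$ for every $i<k$, whereas $\rho_u\ge\rho_v$. Consecutive balls on the path intersect, so $\lvert c_{w_i}-c_{w_{i+1}}\rvert\le\rho_{w_i}+\rho_{w_{i+1}}\le 2\rho_v$ for $i\le k-2$, and summing along the path gives $\lvert c_v-c_{w_{k-1}}\rvert\le 2(k-1)\rho_v\le 2(r-1)\rho_v$. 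Choosing any point $p_u\in B_{w_{k-1}}\cap B_u$ then yields $\lvert p_u-c_v\rvert\le\rho_{w_{k-1}}+2(r-1)\rho_v\le(2r-1)\rho_v$. Finally, since $\rho_u\ge\rho_v$ and $p_u\in B_u$, sliding from $p_u$ towards the centre $c_u$ by a suitable amount produces a point $q_u$ with $B(q_u,\rho_v)\subseteq B_u$ and $\lvert q_u-p_u\rvert\le\rho_v$, so that $D_u\coloneqq B(q_u,\rho_v)$ satisfies $D_u\subseteq B_u$ and $D_u\subseteq\overline B\bigl(c_v,(2r+1)\rho_v\bigr)$.

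A volume comparison then closes the argument. As $u$ ranges over the balls strongly $r$-reachable from $v$, each $D_u$ is contained in the corresponding ball $B_u\in\mathcal{B}$, and distinct such $u$ give distinct balls $B_u$, so the family $\{D_u\}$ inherits ply at most $t$ from $\mathcal{B}$. Each $D_u$ has volume $\omega_d\rho_v^d$, where $\omega_d$ is the volume of the unit $d$-ball, and all of them lie inside $\overline B(c_v,(2r+1)\rho_v)$, of volume $\omega_d(2r+1)^d\rho_v^d$. Integrating the multiplicity function $x\mapsto\#\{u:x\in D_u\}$ over $\R^d$, which is bounded by $t$ and supported in that ball, gives $N\omega_d\rho_v^d\le t\omega_d(2r+1)^d\rho_v^d$; hence $N\le t(2r+1)^d\le t(2r+2)^d$. (If $\rho_v=0$ then every reachable ball contains $c_v$, so $N\le t$ already by the ply bound.) This bounds the $r$-width of the ordering, and therefore $\scol{r}{G(\mathcal{B})}\le t(2r+2)^d$.

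The one genuine subtlety is that the final ball $B_u$ on the path may have arbitrarily large radius, so the triangle-inequality estimate only controls the path through $w_{k-1}$; the point is that $B_u$ nevertheless contains a ball of radius $\rho_v$ close to $p_u$, which is precisely what the ply-and-volume comparison needs. Matching the stated constant $t(2r+2)^d$ (as opposed to a cruder bound such as $t(4r+2)^d$) is then just a matter of placing the auxiliary ball $D_u$ carefully inside $B_u$ rather than, say, inscribing a ball of radius $\rho_v/2$.
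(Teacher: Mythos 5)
Your proof is correct, and it follows essentially the same route as the source the paper cites for this lemma (the paper itself only quotes \cite[Lemma~1]{2022plyballscol} without reproducing the argument): order the balls by non-increasing radius, show every strongly $r$-reachable ball contains a ball of radius $\rho_v$ centred within distance $2r\rho_v$ of $c_v$, and finish with the ply-plus-volume comparison. The handling of the last, possibly large, ball $B_u$ by sliding $p_u$ towards $c_u$ is exactly the right fix, and your constant $t(2r+1)^d$ even slightly beats the stated $t(2r+2)^d$.
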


We also use the following well-known observation that graphs with large shallow clique minors have large strong colouring numbers.
We give a proof for the sake of completeness.

\begin{observation}
    \label{obs:shallow-minor_implies_large_scol}
    If a graph $G$ contains an $r$-shallow $K_h$-minor, then $\scol{4r+1}{G} \geq h-1$.
\end{observation}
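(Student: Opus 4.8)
The plan is to work directly from the definitions. Suppose $G$ has an $r$-shallow $K_h$-minor, witnessed by a model $\phi$ with bags $B_1,\dots,B_h$; by definition each $G[B_i]$ contains a rooted spanning tree $T_i$ of height at most $r$, so every vertex of $B_i$ lies within distance $r$ of the root of $T_i$ in $G[B_i]$, and consequently any two vertices of $B_i$ are joined by a path of length at most $2r$ inside $G[B_i]$. Fix an arbitrary vertex ordering $\leq$ of $G$; the goal is to exhibit a single vertex from which at least $h-1$ vertices are strongly $(4r+1)$-reachable, which then gives $\scol{4r+1}{G}\ge h-1$ since $\leq$ was arbitrary.

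For each $i$ let $m_i$ be the $\leq$-minimum vertex of $B_i$, and relabel so that $m_1$ is $\leq$-maximal among $m_1,\dots,m_h$; set $v:=m_1$. For each $j\in\{2,\dots,h\}$, fix an edge of $G$ between $B_1$ and $B_j$ (which exists since $\phi$ is a minor model) with endpoints $z_j\in B_1$ and $w_j\in B_j$, let $Q_j$ be the path from $w_j$ to $m_j$ inside $T_j$, and let $u_j$ be the first vertex along $Q_j$ (starting at $w_j$) with $u_j\leq v$; this is well-defined because $m_j\leq m_1=v$. I then concatenate a path from $v=m_1$ to $z_j$ inside $T_1$ (length at most $2r$), the edge $z_jw_j$, and the subpath of $Q_j$ from $w_j$ to $u_j$ (length at most $2r$), obtaining a walk of length at most $4r+1$ from $v$ to $u_j$, from which I extract a path $P_j$.

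It remains to check that $u_j$ is strongly $(4r+1)$-reachable from $v$ via $P_j$. Every vertex of $P_j$ other than $u_j$ is a vertex of the walk, hence lies either in $B_1$ — and is therefore $\geq m_1=v$ since $m_1$ is the $\leq$-minimum of $B_1$ — or strictly precedes $u_j$ on $Q_j$ — and is therefore $>v$ by the choice of $u_j$. Also $u_j\neq v$ because $u_j\in B_j$ and $v\in B_1$ lie in disjoint bags, so $u_j<v$. Thus $u_j$ is the unique vertex of $P_j$ smaller than $v$, and $|E(P_j)|\le 4r+1$. Finally, disjointness of the bags makes $u_2,\dots,u_h$ pairwise distinct, so at least $h-1$ vertices are strongly $(4r+1)$-reachable from $v$.

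The points needing care are exactly the two ``greedy'' choices: taking $v$ to be the largest of the bag-minima forces the part of each $P_j$ lying in $B_1$ to stay $\geq v$, and taking $u_j$ to be the first vertex of $Q_j$ that drops to $\leq v$ forces the part of $P_j$ lying in $B_j$ to stay $>v$ until its final vertex. One should also note that replacing the walk by an honest path only removes vertices and keeps $u_j$ (which appears exactly once, as the last vertex) as an endpoint, so these properties are preserved; the length bound $4r+1=2r+1+2r$ is where the constant in the statement comes from.
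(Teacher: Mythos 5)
Your proof is correct and takes essentially the same route as the paper's: both choose the pivot vertex as the largest of the bag-minima, build a path of length at most $2r+1+2r=4r+1$ through the pivot's bag, across a connecting edge, and into each other bag, then truncate at the first vertex that drops below the pivot, with disjointness of bags giving $h-1$ distinct strongly $(4r+1)$-reachable vertices. The only difference is presentational: you spell out the length bookkeeping and the two greedy choices a bit more explicitly than the paper does.
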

\begin{proof}
    Let $\leq$ be an ordering of minimal $(4r+1)$-width for a graph $G$ containing an $r$-shallow $K_h$-minor.
    Let $\phi$ be an $r$-shallow minor model of $K_h$ in $G$. We denote $V(K_h) = \{v_1, v_2, \ldots, v_h\}$ for convenience.

    We choose $x \in \V{G}$ maximal in $\leq$ such that the vertex set $X \coloneqq \Set{y \mid x \leq y}$ contains a bag of $\phi$.
    That is, there exists $v_i \in V(K_h)$ so that $\phi(v_i) \subseteq X$. Without loss of generality, we may assume that $\phi(v_1) \subseteq X$. As $x$ is chosen maximal, the bag of any other vertex $v_i$ of $K_h$ contains a vertex $y_i$ with $y_i \leq x$.
    As $K_h$ is the complete graph on $h$ vertices, there is a path $P_i$ in $G[\phi(v_1)\cup \phi(v_i)]$ from $x$ to $y$. Because $\phi$ is an $r$-shallow minor model, we may choose $P_i$ to have length at most $4r+1$.
    Now, for all $2\leq i \leq h$, let $P'_i$ be the path obtained from $P_i$ by beginning at $x$ and ending at the first vertex which is strictly less than $x$ according to $\leq$. As all the bags are disjoint, these paths $P'_2,\dots, P'_h$ end at pairwise distinct vertices.
    This shows that $\leq$ has $(4r+1)$-width at least $h-1.$
\end{proof}

\newcommand{\polySCOLSSSfunction}[2]{\bigOlog{n^{1-\frac{1}{2#2+2}}}}
\newcommand{\polySCOLSSScdeltafkt}[2]{\bigOlog{n^{1-\frac{1}{2#2+2}}}}

\paragraph*{Asymptotic and Assouad--Nagata dimension.}
Let $G$ be a graph, and let $\dist{G}{\cdot,\cdot}$ denote its usual graph metric.
If the graph is clear from the context, we drop the index.
Let $\mathcal{U}$ be a family of subsets of $V(G)$.
We say that $\mathcal{U}$ is \emph{$D$-bounded} if each set $U\in \mathcal{U}$ has diameter at most $D$.
We say that $\mathcal{U}$ is \emph{$r$-disjoint} if for any $a, b$ belonging to different elements of $\mathcal{U}$ we have $\dist{}{a,b}> r$.

We say that $D: \R_+ \to \R_+$ is an \emph{n-dimensional control function} for a graph class $\cc$, if for any $G\in \cc$ and $r > 0$,  $V(G)$ has a cover $\mathcal{U} = \bigcup_{i=1}^{n+1}  \mathcal{U}_i$  such that each $\mathcal{U}_i$ is $r$-disjoint and each element of $\mathcal{U}$ is $D(r)$-bounded. The \emph{asymptotic dimension} of  $\cc$ denoted by $\asdim(\cc)$,
is the least integer $n$ such that $\cc$ has an $n$-dimensional control function. If no such integer $n$ exists, then $\asdim(\cc)$ is infinite. The \emph{Assouad--Nagata dimension} of $\cc$, denoted by $\andim(\cc)$, is defined similarly, except that we insist that $D(r) \in \bigO{r}$.

\section{Sphere graphs and sphere dimension} \label{sec SG}

In this section, we show that the graph classes $\SphereInt{d}$ can be thought of as a higher-dimensional analogue of the well-studied class of circle graphs. 
Recall that a \emph{circle graph} is an intersection graph of a family of chords of $\BS^1$. We generalise this definition as follows. A 
\emph{\dsg} is an intersection graph of a subfamily of 
\begin{equation*}
    \ch_d \coloneqq \{ \mathbb{D}^d \cap H \mid H \text{ is a hyperplane of $\R^{d+1}$}\},
\end{equation*}
where the $d$-disc $\mathbb{D}^d$ is the set $\{x\in \R^{d+1} \mid |x,0|\leq 1\}$, and $\BS^d$ is its boundary.
In particular, $\ch_1$ consists of all chords of the circle $\BS^1$, and $\ch_2$ consists of the flat discs with boundary on $\BS^2$.
Thus, 1-sphere graphs are exactly the circle graphs, which by definition are the graphs in $\SphereInt{1}$.
We observe that 
the class of \dsg s coincides with $\SphereInt{d}$: 

\begin{observation} \label{obs SP}
For every $d\geq 1$, a countable graph $G$ is a $d$-sphere graph if and only if it belongs to $\SphereInt{d}$.
\end{observation}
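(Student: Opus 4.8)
The plan is to establish both inclusions via stereographic projection from a point of $\BS^d \subseteq \R^{d+1}$ onto a copy of $\R^d$. Recall that stereographic projection $\pi$ is a bijection between $\BS^d$ minus the projection point $p$ and $\R^d$, and it sends $(d-1)$-spheres of $\BS^d$ not through $p$ to $(d-1)$-spheres of $\R^d$, and $(d-1)$-spheres of $\BS^d$ through $p$ to affine hyperplanes of $\R^d$ (which for the purposes of intersection graphs we can treat as degenerate spheres, or perturb away). The key geometric observation is that an element $\mathbb{D}^d \cap H$ of $\ch_d$ is a flat $d$-disc whose boundary $\BS^d \cap H$ is a $(d-1)$-sphere lying on $\BS^d$; and two such flat discs in $\mathbb{D}^d$ intersect if and only if their boundary $(d-1)$-spheres ``link'' on $\BS^d$, i.e.\ neither of the two closed regions of $\BS^d$ cut out by one boundary sphere contains the other boundary sphere. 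Under $\pi$, this linking condition translates exactly into the two corresponding $(d-1)$-spheres of $\R^d$ intersecting. This is the higher-dimensional analogue of the familiar fact that two chords of a circle cross iff their endpoints interleave on the circle.

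First I would prove that every \dsg{} lies in $\SphereInt{d}$. Given a family $\{\mathbb{D}^d \cap H_v\}_{v \in V(G)}$ realising $G$, pick a point $p \in \BS^d$ avoiding all the boundary spheres $\BS^d \cap H_v$ (possible since the graph is countable and each boundary sphere is nowhere dense, or simply because finitely/countably many $(d-1)$-spheres cannot cover $\BS^d$). Apply stereographic projection from $p$ to send each boundary sphere $\BS^d \cap H_v$ to a genuine $(d-1)$-sphere $S_v$ in $\R^d$. By the linking-versus-intersecting equivalence above, $S_u \cap S_v \neq \emptyset$ precisely when $\mathbb{D}^d \cap H_u$ and $\mathbb{D}^d \cap H_v$ intersect, so $\{S_v\}$ realises $G$ as a sphere intersection graph in $\R^d$, giving $G \in \SphereInt{d}$.

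Conversely, given $G \in \SphereInt{d}$ realised by spheres $\{S_v\}_{v \in V(G)}$ in $\R^d$, I would first translate and scale so that all the spheres lie in a bounded region, then identify $\R^d$ with the image of $\BS^d \setminus \{p\}$ under stereographic projection and pull each $S_v$ back to a $(d-1)$-sphere $\BS^d \cap H_v$ on $\BS^d$ (each $(d-1)$-sphere of $\R^d$ is the $\pi$-image of such a sphere on $\BS^d$, cutting $\BS^d$ with a unique hyperplane $H_v$ of $\R^{d+1}$). Taking the flat discs $\mathbb{D}^d \cap H_v$, the same equivalence shows two of them intersect iff the corresponding spheres $S_u, S_v$ intersect in $\R^d$, so $G$ is a \dsg. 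For $d=1$ this is exactly the statement that circle graphs are the graphs in $\SphereInt{1}$, and indeed the argument specialises to the classical stereographic-projection description of circle graphs.

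The main obstacle is making the linking/intersection equivalence fully rigorous in all dimensions: one has to verify carefully that two flat discs $\mathbb{D}^d \cap H_u$ and $\mathbb{D}^d \cap H_v$ share a point of $\mathbb{D}^d$ exactly when their boundary $(d-1)$-spheres are ``linked'' on $\BS^d$, and that $\pi$ preserves this linking relation. I expect this to follow from elementary convexity and the conformal/sphere-preserving properties of stereographic projection (mapping spheres to spheres, preserving inside/outside relationships up to the role of $p$), but it is the step that needs care. A minor secondary point is handling hyperplanes arising when a boundary sphere passes through $p$, which is avoided by the generic choice of $p$ in one direction and is harmless (or removable by perturbation) in the other.
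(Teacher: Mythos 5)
Your overall strategy---stereographic projection from a generic point of $\BS^d$ missing all boundary spheres, applied in both directions---is exactly the paper's. The gap lies in the key lemma you yourself single out as the main obstacle: the ``linking'' characterisation is not the right statement for $d\geq 2$, and as formulated it is false. Both of your claimed equivalences fail for tangential configurations: if the boundary sphere of one disc touches the boundary sphere of the other in a single point while otherwise lying inside one closed cap, the two discs do intersect (at that point) although the boundaries are not linked in your sense; likewise, two internally tangent spheres in $\R^d$ intersect without being linked. More importantly, for $d\geq 2$ linking is not what governs disc intersection at all: two elements $\mathbb{D}^d\cap H_u$ and $\mathbb{D}^d\cap H_v$ of $\ch_d$ intersect if and only if their boundary spheres $H_u\cap\BS^d$ and $H_v\cap\BS^d$ intersect as subsets of $\BS^d$. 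Indeed, $(\mathbb{D}^d\cap H_u)\cap(\mathbb{D}^d\cap H_v)=\mathbb{D}^d\cap H_u\cap H_v$, and when $H_u\neq H_v$ are not parallel, $H_u\cap H_v$ is an affine subspace of dimension $d-1\geq 1$, which, being connected and unbounded, meets the closed ball $\mathbb{D}^d$ if and only if it meets its boundary $\BS^d$ (the parallel and identical cases are trivial). Given this, no linking analysis is needed: stereographic projection is injective on $\BS^d$ minus the projection point, so it transfers intersection and non-intersection of the boundary spheres verbatim, and it sends them to genuine spheres of $\R^d$; this elementary fact is what the paper's ``by construction'' is silently invoking, and it is simpler than the lemma you propose to prove.

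Relatedly, your closing remark that the argument ``specialises'' to $d=1$ is not correct, and it points to the source of the confusion: $d=1$ is precisely where linking and boundary intersection come apart, since two crossing chords have disjoint endpoint pairs, so the recipe ``project the boundary spheres and take their intersection graph'' would not reproduce the chord intersection graph. This is why the statement for $d=1$ is definitional ($\SphereInt{1}$ is defined to be the class of circle graphs) and the projection argument is run only for $d\geq 2$, as in the paper's proof. Once you replace your linking lemma by the affine-subspace fact above and keep $d=1$ as a purely definitional case, your proof coincides with the paper's.
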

\begin{proof}
For $d=1$ this is so by definition. For $d\geq 2$, suppose $\Theta= \{H_v \mid v\in V(G)\}$ is a $d$-sphere representation of $G$, i.e.\ a family of elements of $\ch_d$ witnessing that $G$ is a \dsg. Assume that no $H_v$ contains the \emph{north pole} $\{0,\ldots,0,1\}$ of $\BS^d$, which we can since $G$ is countable (which is a much stronger assumption than we actually need). Notice that $H_v \cap \BS^d$ is homeomorphic to $\BS^{d-1}$ for every $v$. Letting $f$ be the stereographic projection of $\BS^d$ onto $\R^d$, we observe that $f(H_v \cap \BS^d)\eqqcolon H'_v$ is also homeomorphic to $\BS^{d-1}$, where we use the well-known property of stereographic projection that it preserves spheres not containing the north pole \cite[Lemma~4.2]{MiTeThVaGeo}. Then $G$ is isomorphic to the intersection graph of the family $\Theta'= \{H'_v \mid v\in V(G)\}$ by construction.

Conversely, we use $f^{-1}$ to stereographically project a representation of $G$ as an intersection graph of spheres in $\R^d$ (avoiding the origin) to a $d$-sphere graph representation.
\end{proof}

We now give some other observations about sphere graphs and sphere dimension. As a warm-up, we observe that if $G$ is planar, then it has sphere dimension~$2$. This is because the Circle Packing Theorem \cite{Koebe} asserts that every finite\footnote{More generally, this applies to any connected, locally finite, planar graph; see \cite{HeSchrFix} and references therein.} planar graph admits a sphere packing in~$\R^2$. (A \emph{sphere packing} of a graph $G$ is an arrangement of closed euclidean spheres $\{S_v: v\in V(G)\}$ in $\R^d$ or $\BS^d$ \st $S_v \cap S_w$ is empty whenever $vw\not\in E(G)$ and $S_v \cap S_w$ is a single point whenever $vw\in E(G)$. In other words, $G$ is the tangency graph of the family $\{S_v:v \in V(G)\}$.) 
It is known that every finite graph $G$ admits a sphere packing in $\R^d$ for $d\leq |V(G)|-1$ \cite{MOEppstein}, and so $G \in \SphereInt{|V(G)|-1}$. This proves the following observation.


\begin{observation} \label{obs every}
Every graph $G$ with $|V(G)|\geq 3$ has sphere dimension at most ${|V(G)|-1}$.
\end{observation}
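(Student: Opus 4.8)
\textbf{Proof proposal for \cref{obs every}.}

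The plan is to prove that every finite graph $G$ with at least $3$ vertices admits a sphere packing in $\R^d$ with $d = |V(G)| - 1$, and then apply the definition of sphere dimension: since a tangency graph is in particular an intersection graph of the same family of spheres (tangent spheres intersect in exactly one point, disjoint spheres do not intersect), a sphere packing of $G$ in $\R^d$ witnesses that $G \in \SphereInt{d}$. Together with \cref{obs SP}, or directly, this gives that $G$ is a $d$-sphere graph, hence has sphere dimension at most $d = |V(G)|-1$. As remarked in the paragraph preceding the statement, the existence of such a sphere packing for $d \leq |V(G)|-1$ is exactly the content of the cited result \cite{MOEppstein}, so the bulk of the work is just to spell out why a packing is an intersection representation.

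First I would recall the explicit construction behind the bound $d = |V(G)| - 1$: place the $n = |V(G)|$ vertices of $G$ at the vertices of a regular simplex in $\R^{n-1}$, so that all pairwise distances are equal, say to $2$. Centering a sphere of radius $1$ at each vertex then makes every pair of spheres tangent; this realises the complete graph $K_n$ as a tangency graph in $\R^{n-1}$. To realise an arbitrary $G$ on the same vertex set, shrink the spheres at a suitable subset of vertices: for each non-edge $uv \notin E(G)$ we need the corresponding spheres to be disjoint, while for each edge we need tangency. One clean way is to perturb the radii and centres slightly; but the simplest rigorous route is to invoke \cite{MOEppstein} directly, since the statement there already provides, for every graph on $n$ vertices, a sphere packing in $\R^{n-1}$.

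Then I would observe the translation from packings to intersection graphs: if $\{S_v : v \in V(G)\}$ is a sphere packing of $G$ in $\R^d$, then for $vw \in E(G)$ the spheres $S_v, S_w$ meet (in a single point), and for $vw \notin E(G)$ they are disjoint; hence $G$ is precisely the intersection graph of $\{S_v : v \in V(G)\}$, so $G \in \SphereInt{d}$. Taking $d = |V(G)|-1$ and using that sphere dimension is the least such $d$ completes the argument.

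The only mild subtlety — and the place where one must be slightly careful rather than where any real difficulty lies — is the hypothesis $|V(G)| \geq 3$, i.e.\ that $d \geq 2$: the paper defines $\SphereInt{d}$ only for $d \geq 2$ (with $\SphereInt{1}$ set by convention to be the circle graphs), so the bound $|V(G)| - 1$ is only meaningful once $|V(G)| \geq 3$. For $|V(G)| \in \{1,2\}$ the claim would concern sphere dimension $0$ or $1$, which is outside the stated regime, and that is exactly why the hypothesis $|V(G)| \geq 3$ appears. I expect no genuine obstacle here; the statement is essentially a direct corollary of the cited sphere-packing result once the packing-to-intersection-graph translation is noted.
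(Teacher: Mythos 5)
Your proposal is correct and follows the paper's own proof essentially verbatim: both simply invoke the cited sphere-packing result \cite{MOEppstein} to get a packing of $G$ in $\R^{|V(G)|-1}$ and observe that a packing is in particular an intersection representation, so $G \in \SphereInt{|V(G)|-1}$. (Your informal aside about realising an arbitrary $G$ by ``shrinking'' spheres of a simplex packing would need more care — shrinking a sphere to kill a non-edge also breaks its tangencies — but since you defer to the citation for the packing, this does not affect the argument.)
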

%

Conversely, we prove the following.

\begin{observation} \label{thm not d}
For every $d\in \N$, there is a finite graph which is not a $d$-sphere graph.
\end{observation}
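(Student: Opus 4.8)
The plan is to exhibit, for each $d$, a finite graph that cannot be represented as an intersection graph of spheres in $\R^d$. The natural strategy is a counting/dimension argument: the family $\ch_d$ of sphere-discs is parametrised by finitely many real coordinates (a hyperplane in $\R^{d+1}$ is determined by $d+1$ parameters, or equivalently a $(d-1)$-sphere in $\R^d$ by $d+1$ parameters: centre and radius), so $d$-sphere representations have bounded VC dimension, and hence $\SphereInt{d}$ cannot contain all finite graphs on a fixed number of vertices once that number is large enough.

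First I would make precise the bounded-complexity claim: the intersection relation ``$H_u \cap H_v \neq \emptyset$'' between two spheres in $\R^d$ is a semi-algebraic condition in the $2(d+1)$ real parameters describing the pair. By the standard machinery (Milnor--Thom bounds, or the Warren/Sauer--Shelah-type arguments used in the theory of semi-algebraic graphs, e.g.\ as in \cite{SemiAlgebraic}), the number of distinct intersection graphs realisable on $n$ labelled vertices by such a representation is at most $2^{O_d(n \log n)}$. Since the total number of graphs on $n$ labelled vertices is $2^{\binom{n}{2}}$, for $n$ sufficiently large (depending on $d$) there must exist a graph on $n$ vertices with sphere dimension greater than $d$. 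This immediately yields the statement.

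Alternatively, and perhaps more cleanly for a short observation, I would invoke the boxicity/sphericity literature directly: Maehara and others showed that these dimension parameters are unbounded over all finite graphs, and essentially the same counting argument as above applies. One could also give an explicit family: complete bipartite graphs $K_{m,m}$ or Kneser-type graphs are classical witnesses that ``dimensionality'' parameters grow, and one shows a suitable such graph needs dimension $> d$. However, getting an explicit bound requires more care, so I would lean on the counting argument unless an explicit example is wanted.

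The main obstacle is making the ``bounded number of realisable intersection graphs'' step rigorous without importing heavy machinery: one needs a clean statement that pairs of spheres in $\R^d$ intersect according to a fixed polynomial inequality of bounded degree in the parameters, and then a Milnor--Thom-style bound on the number of sign patterns of $\binom{n}{2}$ such polynomials in $O_d(n)$ variables. This is entirely standard but must be cited or sketched carefully; everything else (comparing with $2^{\binom{n}{2}}$, concluding existence) is immediate.
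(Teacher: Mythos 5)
Your counting argument is correct, but it is a genuinely different route from the paper. The paper's proof is explicit and elementary: it takes a triangle-free graph $G_n$ of minimum degree $n$ (a portion of the $n$-dimensional cubic lattice) that stays connected after deleting any closed neighbourhood, assumes a representation by spheres in $\R^d$, picks a sphere of minimum radius, uses triangle-freeness plus the connectivity condition to extract at least $n/2$ spheres with disjoint interiors and radius at least that of the chosen sphere all meeting it, and then bounds their number by a volume/packing argument depending only on $d$. Your approach instead parametrises sphere representations by $\bigO{}_d(n)$ reals, observes that adjacency of a pair of spheres is determined by the signs of two bounded-degree polynomials (namely $\|c_i-c_j\|^2-(r_i+r_j)^2$ and $\|c_i-c_j\|^2-(r_i-r_j)^2$), and applies a Milnor--Thom/Warren sign-pattern bound to get at most $2^{\bigO{}_d(n\log n)}$ realisable graphs, fewer than $2^{\binom{n}{2}}$ for large $n$. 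Both are valid; yours is shorter modulo the cited machinery and is robust (it applies verbatim to any class of objects with semi-algebraic intersection conditions of bounded description complexity, and shows that \emph{most} graphs fail to lie in $\SphereInt{d}$), while the paper's argument is self-contained, avoids real-algebraic tools, and exhibits concrete witness families, which the paper then strengthens (via \cref{thm seps}) to cubic graphs of arbitrarily high sphere dimension. Two cautions: your aside proposing complete bipartite graphs $K_{m,m}$ as explicit witnesses would fail, since the paper notes (and it is easy to check already for circle graphs) that all complete bipartite graphs have bounded sphere dimension; and since tangent spheres intersect, your adjacency conditions involve non-strict inequalities, so you should either use a sign-pattern bound that allows zero signs or add a standard perturbation step before invoking Warren's theorem.
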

\begin{proof}
We may assume that $d\geq 2$ since it is well known that not every graph is a circle graph; see~\cite{BouchetCircleChar}. Let $n$ be an integer; we think of $n$ as being large in comparison to $d$.

Let $G_n$ be a graph of minimum degree $n$ that does not contain a triangle as a subgraph, and such that $G_n$ remains connected after removing the closed neighbourhood $N[y]\coloneqq N(y) \cup \{y\}$ of any $y\in V(G_n)$; for example, $G_n$ could be a portion of the $n$-dimensional cubic lattice. Suppose $G_n$ can be realised as a \dsg, and therefore as an intersection graph of spheres $\{S_v \mid v\in V(G_n)\}$ in $\R^d$ by \cref{obs SP}. Pick a sphere of minimum radius among the $S_v$. Then, the spheres representing the neighbours of $v$ intersect $S_v$ and are mutually disjoint since $G_n$ is triangle-free. Moreover, there is no triple $S_x, S_y, S_z$ of such spheres \st $S_y$ lies in the interior of $S_x$ and $S_z$ lies in the interior of $S_y$ because $N[y]$ would separate $x$ from $z$ in this case. Thus, after deleting any such spheres lying in the interior of others, we are left with at least $n/2$ spheres with disjoint interiors intersecting $S_v$, each of radius at least that of $S_v$. By a volume argument, the number of such spheres is bounded by a function of $d$. Thus $G_n$ is not a \dsg\ when $n$ is large enough.
\end{proof}

It is possible to strengthen \cref{thm not d} by providing cubic graphs of arbitrarily high sphere dimension. Indeed, this is a corollary of \cref{thm seps}, as we now discuss. Let $\{G^d_n: n\in \N\}$ be a family of cubic expander graphs with $\lim_{n\to \infty} |V(G^d_n)| = \infty$. Then, since expanders do not have small separators, at most finitely many members of this family can have sphere dimension at most $d$ by \cref{thm seps} and by Lee's~\cite{LeeSeparators} separator theorem for circle graphs. Alternatively, we can let $\{G^d_n: n\in \N\}$ be a family of finite cubic graphs of asymptotic dimension more than $2d+1$; for instance, such graphs can be obtained from a portion of the $(2d+2)$--dimensional hypercubic lattice by replacing each vertex with a subcubic tree. Then at most finitely many members of this family can have sphere dimension at most $d$ by the result of \cite{dvovrak2022asymptotic} that our \cref{thm asdim} generalises.

\section{Strongly sublinear separators}
\label{sec:SSS}

In this section, we prove \cref{thm seps}, the separator theorem for the class $\SphereInt{d}_t$ of all graphs which have sphere dimension at most $d$ and do not have $K_{t,t}$ as a subgraph. 

We need some additional notation. Given a sphere $S$ in $\R^d$, we write $\ball{S}$ for the (closed) ball with boundary $S$.
We say that a sphere $S'$ \emph{contains} a different sphere $S$ if $\ball{S'}$ contains $\ball{S}$.
Given a collection $\mathcal{S}$ of spheres in $\R^d$, we say that a sphere $S \in \mathcal{S}$ is \emph{maximal} if there is no $S' \in \mathcal{S}\setminus \{S\}$ such that $S'$ contains $S$. Likewise, a sphere $S \in \mathcal{S}$ is \emph{minimal} if there is no $S' \in \mathcal{S}\setminus \{S\}$ such that $S$ contains $S'$.
We call $\mathcal{S}$ \emph{nested} if for any two spheres in $\mathcal{S}$, one contains the other. In this section, every collection of spheres is finite.

We start by noting that excluding $K_{t,t}$ provides a bound on the number of {short} paths between the ``inside'' and the ``outside'' of a large set of nested spheres.

\begin{lemma}
    \label{lem:few_short_paths_through_nested_spheres}
    Let $r,t \in \N$, let $\mathcal{S}$ be a collection of spheres in $\R^d$, and let $\mathcal{N} \subseteq \mathcal{S}$ be a set of $t(r+1)$-many nested spheres. Suppose that the intersection graph $G(\mathcal{S}\setminus \mathcal{N})$ contains a collection $\mathcal{P}$ of $t^2(r+1)$-many pairwise vertex-disjoint paths such that for each path $P \in \mathcal{P}$,
    \begin{itemize}
        \item one end of $P$ intersects the minimal sphere in $\mathcal{N}$, 
        \item the other end of $P$ intersects the maximal sphere in $\mathcal{N}$, and
        \item $P$ has length at most $r$.
    \end{itemize}
    Then, $G(\mathcal{S})$ contains a $K_{t,t}$ as a subgraph.
\end{lemma}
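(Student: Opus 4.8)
The plan is to combine a simple topological fact about nested spheres with a double-counting argument. Index the spheres of $\mathcal{N}$ as $N_1,\dots,N_m$ with $\ball{N_1}\subseteq\ball{N_2}\subseteq\dots\subseteq\ball{N_m}$ (a genuine total order, since distinct spheres have distinct balls), where $m:=t(r+1)$; thus $N_1$ is the minimal and $N_m$ the maximal sphere of $\mathcal{N}$. The key geometric observation I would establish first is: \emph{if $Y\subseteq\R^d$ is connected and meets both $N_1$ and $N_m$, then $Y$ meets every $N_i$.} Indeed, picking $x\in Y\cap N_1$ and $y\in Y\cap N_m$, for each $i$ we have $x\in\ball{N_1}\subseteq\ball{N_i}$ while $y\notin\mathrm{int}\,\ball{N_i}$ (because $\mathrm{int}\,\ball{N_i}\subseteq\mathrm{int}\,\ball{N_m}$ and $y\in\partial\ball{N_m}$); so either one of $x,y$ already lies on $N_i$, or $Y$ meets the two disjoint open sets $\mathrm{int}\,\ball{N_i}$ and $\R^d\setminus\ball{N_i}$ and hence, being connected, must meet $N_i$. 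The same argument applied to $Y=Q$ a single sphere (connected as $d\ge 2$) shows that for any sphere $Q$ the index set $\{\,i : Q\cap N_i\neq\emptyset\,\}$ is an \emph{interval} of $\{1,\dots,m\}$.

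Next I would apply this as follows. Fix a path $P\in\mathcal{P}$, say on vertices $Q_0,\dots,Q_\ell$ with $\ell\le r$, and set $Y=\bigcup_j Q_j$; this is connected (consecutive spheres intersect) and meets $N_1$ and $N_m$ by hypothesis, so $Y$ meets every $N_i$, i.e.\ for each $i$ some vertex of $P$ intersects $N_i$. Since $P$ has at most $r+1$ vertices and there are $m=t(r+1)$ values of $i$, pigeonhole gives a single vertex $v_P\in V(P)$ with $v_P\cap N_i\neq\emptyset$ for at least $t$ indices $i$; by the interval fact, $J_P:=\{\,i: v_P\cap N_i\neq\emptyset\,\}$ is an interval of length at least $t$, so its left endpoint $a_P$ lies in $\{1,\dots,m-t+1\}$. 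Because $|\mathcal{P}|=t^2(r+1)=t\cdot m>(t-1)(m-t+1)$, some value $a$ equals $a_P$ for at least $t$ distinct paths $P_1,\dots,P_t\in\mathcal{P}$. For every $j\in\{1,\dots,t\}$ and $k\in\{0,\dots,t-1\}$ we have $a+k\in J_{P_j}$, so the sphere $v_{P_j}$ intersects $N_{a+k}$, i.e.\ $v_{P_j}N_{a+k}\in E(G(\mathcal{S}))$. The vertices $v_{P_1},\dots,v_{P_t}$ are pairwise distinct (the paths in $\mathcal{P}$ are vertex-disjoint) and belong to $\mathcal{S}\setminus\mathcal{N}$, whereas $N_a,\dots,N_{a+t-1}$ are distinct and belong to $\mathcal{N}$; hence these two $t$-sets of vertices induce in $G(\mathcal{S})$ a subgraph containing $K_{t,t}$, as required.

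The substance of the proof is the first paragraph: recognising that a short path joining the inside to the outside of the stack $\mathcal{N}$ must puncture every sphere of the stack, and that any single sphere can only be punctured by a contiguous block of it. After that, the numbers in the statement ($t^2(r+1)$ paths, $t(r+1)$ nested spheres, length at most $r$) are tuned precisely so that two rounds of pigeonhole — first within a path to locate $v_P$, then across paths to align the left endpoints of the intervals $J_P$ — deliver the biclique. The main obstacle to be careful about is degenerate tangency, where a path vertex meets some $N_i$ without genuinely crossing it; but the connectedness argument above subsumes all such cases, so no separate case analysis is needed.
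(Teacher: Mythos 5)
Your proof is correct and follows essentially the same route as the paper's: the neighbourhood of each path vertex in the nested family is an interval, a pigeonhole within each short path finds a vertex meeting at least $t$ consecutive spheres, and a second pigeonhole over the $t^2(r+1)$ disjoint paths aligns the left endpoints of these intervals to produce the $K_{t,t}$. The only difference is that you spell out the topological crossing and interval facts that the paper asserts without proof, which is a welcome addition rather than a deviation.
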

\begin{proof}
    We sort the vertices in $\mathcal{N}$ by containment and consider $\leq$ to be this ordering. For every path $P \in \mathcal{P}$ and every vertex $x \in \V{P}$, the neighbourhood of $x$ in $\mathcal{N}$ is an interval with respect to $\leq$.
    We denote this interval by $I_x$.
    The first two conditions of the \namecref{lem:few_short_paths_through_nested_spheres} guarantee that every sphere in $\mathcal{N}$ is a neighbour of at least one vertex in $P$. Thus, since $\Abs{\mathcal{N}} = t(r+1)$ and $P$ has length at most $r$, every $P \in \mathcal{P}$ contains a vertex $x_P$ with $\Abs{I_{x_P}} \geq t$.
    Also, there are at most $t(r+1)$ choices for the first vertex (according to $\leq$) inside $I_{x_P}$.
    Therefore, there are $t$ vertices of the form $x_P$ (on different paths) such that their intervals all start in the same vertex.
    These $t$ vertices, plus the first $t$ vertices in their intervals, yield the desired $K_{t,t}$-subgraph.
\end{proof}

Using this insight, we next prove that ``minimal'' graphs that contain a certain shallow clique minor cannot be represented by spheres with too much nesting.

\begin{lemma}
    \label{lem:minimal_shallow_model_bounded_nestedness}
    Let $t,d,r,h \in \N$ with $t,d \geq 2$, and suppose that $G \in \SphereInt{d}_t$ has as few vertices as possible subject to containing an $r$-shallow $K_h$-minor. Let $\mathcal{S}$ be a collection of spheres in $\R^d$ such that $G=G(\mathcal{S})$. Then every nested subset of $\mathcal{S}$ has size at most $36t^4(3r+1)^3$.
\end{lemma}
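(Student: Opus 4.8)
The plan is to argue by contradiction: suppose $\mathcal{N}\subseteq\mathcal{S}$ is a nested set with $|\mathcal{N}| > 36t^4(3r+1)^3$, and use the minimality of $G$ together with \cref{lem:few_short_paths_through_nested_spheres} to derive a contradiction. First I would invoke the $r$-shallow $K_h$-minor model $\phi$ in $G$ and, via \cref{obs:shallow-minor_implies_large_scol}, translate it into a lower bound on the strong colouring number; but more directly, the key structural fact I want is that because $G$ has as few vertices as possible, every vertex of $G$ must be ``used'' by the minor model $\phi$ in an essential way — deleting any vertex should destroy the $r$-shallow $K_h$-minor. I would make this precise: if some vertex $v$ were not essential, then $G-v$ still lies in $\SphereInt{d}_t$ (sphere dimension and forbidding $K_{t,t}$ are both closed under taking subgraphs) and still has an $r$-shallow $K_h$-minor, contradicting minimality. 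So every vertex lies in some bag $\phi(x)$, and moreover removing it breaks either the spanning-tree-of-height-$\le r$ condition for its bag or some edge of $K_h$.

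Next, I would extract from the large nested set $\mathcal{N}$ a smaller sub-collection of nested spheres $\mathcal{N}'$ that are ``spread out'' in containment, say with $t(r+1)$ of them, chosen so that between consecutive chosen spheres there is a lot of room. The idea is that since $\mathcal{N}$ is huge (cubic in $r$, quartic in $t$, up to constants), I can find $t(r+1)$ nested spheres such that the region between the minimal and maximal ones is crossed by many of the tree-paths coming from the bags of $\phi$. Concretely: for each vertex $x\in V(K_h)$ whose bag $\phi(x)$ has vertices both ``inside'' the minimal sphere of $\mathcal{N}'$ and ``outside'' the maximal one, the rooted spanning tree of $G[\phi(x)]$ (of height $\le r$) must contain a path from inside to outside of length $\le 2r$, hence a subpath in $G(\mathcal{S}\setminus\mathcal{N}')$ of length $\le r$ after trimming (or I adjust the parameters: this is where the factor $3r+1$ versus $r$ comes in — I would apply \cref{lem:few_short_paths_through_nested_spheres} with the parameter $3r$ in place of $r$ so that paths of length up to $3r$ are covered, matching the exponent in the claimed bound $36t^4(3r+1)^3$). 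Collecting these paths over $t^2(r+1)$ many bags — and I can ensure this many bags straddle $\mathcal{N}'$ by a counting/pigeonhole argument using the size of $\mathcal{N}$ — and making them pairwise disjoint (bags are disjoint by definition of a minor model, so the paths are automatically vertex-disjoint), \cref{lem:few_short_paths_through_nested_spheres} produces a $K_{t,t}$ subgraph of $G(\mathcal{S})=G$, contradicting $G\in\SphereInt{d}_t$.

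The main obstacle I anticipate is the bookkeeping that turns ``$\mathcal{N}$ is large'' into ``many bags straddle a well-chosen nested sub-collection of the right size with paths of the right length.'' There are three quantities to juggle: the number of nested spheres we need in the end ($t(r+1)$, for \cref{lem:few_short_paths_through_nested_spheres}), the number of straddling paths we need ($t^2(r+1)$), and the allowed path length (which forces us to sacrifice a factor of roughly $3$ in $r$ when passing from height-$r$ trees to paths between nested levels). Getting the constant $36t^4(3r+1)^3$ exactly will require partitioning $\mathcal{N}$ into $(3r+1)$ ``layers'' (so that a path of length $\le 3r$ cannot skip a whole layer), then arguing that each straddling bag contributes a controlled number of path segments, and finally that having more than $36t^4(3r+1)^3$ nested spheres forces $t^2(r+1)$ genuinely disjoint straddling paths through a common window of $t(r+1)$ consecutive spheres. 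I would set this up carefully with explicit pigeonhole steps but not belabor the arithmetic; the conceptual content is entirely in the minimality argument and the reduction to \cref{lem:few_short_paths_through_nested_spheres}.
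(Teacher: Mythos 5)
Your high-level frame matches the paper's (contradiction, minimality gives that every vertex lies in a bag, then feed disjoint short crossing paths into \cref{lem:few_short_paths_through_nested_spheres} to force a $K_{t,t}$), but the core of the argument --- actually producing $t^2(\cdot)$ pairwise disjoint short paths in $G(\mathcal{S}\setminus\mathcal{N})$ from the innermost to the outermost sphere of a suitable window --- is missing, and the mechanism you propose for it does not work. You want many bags to ``straddle'' the window, each supplying a tree-path of length $\le 2r$ from inside the minimal sphere to outside the maximal one. Nothing forces even a single bag to straddle: a bag can lie entirely inside the innermost sphere, entirely outside, or contain many spheres of $\mathcal{N}$ without containing anything beyond them. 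The crossing paths cannot be found; they have to be \emph{built}, and the paper builds them from two resources you do not exploit: the completeness of $K_h$ (every pair of bags is joined by an edge, so an inner-sphere bag and an outer-sphere bag can be connected by a path of length $\le 4r+1$ through just those two bags), and, when many nested spheres sit in one bag, the second consequence of minimality --- every leaf of a bag's spanning tree has a neighbouring bag that meets that tree only at this leaf --- which lets one route out of the bag via distinct leaves and distinct private neighbour bags and pair up inner with outer spheres, giving paths of length $\le 6r+1$. The case split (at least $6t^2(3r+1)^2$ spheres of $\mathcal{N}$ in one bag, versus at least $6t^2(3r+1)$ in pairwise distinct bags) is what produces the product bound $36t^4(3r+1)^3$; your proposed layering of $\mathcal{N}$ into $3r+1$ levels does not correspond to this structure.

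Two further concrete problems: (a) your paths must live in $G(\mathcal{S}\setminus\mathcal{N})$, i.e.\ avoid the nested spheres themselves, and this is precisely what fails for a path inside a bag that contains many spheres of $\mathcal{N}$ --- avoiding them is the whole point of the detour through leaves and neighbouring bags in the paper's first case; and (b) disjointness is \emph{not} automatic from disjointness of bags, because in the concentrated case all crossing paths start in the same bag $\phi(x)$; the paper gets disjointness there by taking the chosen spheres at a common distance from the root of $T_x$ (so their leaf-paths are disjoint) and noting that distinct leaves have distinct private neighbour bags. As written, your proposal defers exactly these points to ``counting/pigeonhole bookkeeping,'' but they are not bookkeeping --- they are the substance of the proof.
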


\begin{proof}
    Suppose towards a contradiction that there is a nested set $\mathcal{N}\subseteq \mathcal{S}$ of size at least $36t^4(3r+1)^3$.
    Let $\phi$ be an $r$-shallow minor model of $K_h$ in $G$. For each vertex $x \in V(K_h)$, we fix some rooted spanning tree $T_{x}$ of the subgraph of $G$ induced on $\phi(x)$ of height at most $r$.
    Note that, as $G$ is minimal with respect to containing an $r$-shallow $K_h$-minor, every vertex of $G$ is in a bag of $\phi$. Similarly, for every vertex $x \in V(K_h)$, every leaf of $T_{x}$ has an edge to some other bag which has no other neighbours in $T_{x}$.
    We now break into cases based on whether many vertices in $\mathcal{N}$ lie in the same bag or in different bags.
    
    First, consider the case that at least $6t^2(3r+1)^2$ vertices of $\mathcal{N}$ lie in a single bag, say the bag of $x \in V(K_h)$. As $T_{x}$ has height at most $r$, at least a $\frac{1}{(r+1)}$ proportion of these elements all have the same distance to the root of $T_{x}$. Thus, there exists a set $\mathcal{X} \subseteq \mathcal{N}$ of size at least $6t^2(3r+1)$ such that all vertices in $\mathcal{X}$ are not only in $T_{x}$ but also have the same distance to the root of $T_{x}$. So, no vertex in $\mathcal{X}$ lies on the path between the root of $T_x$ and another vertex in $\mathcal{X}$. For each $X \in \mathcal{X}$, we fix an arbitrary leaf vertex $\ell(X)$ of $T_x$ that is a descendent of $X$ in $T_x$. (It is possible that $\ell(X)=X$.) Let us denote the path of $T_x$ joining $X$ and $\ell(X)$ by $P(X)$. Since all of the vertices in $\mathcal{X}$ have the same distance to the root, the paths $\{P(X) \mid X \in \mathcal{X}\}$ are pairwise vertex-disjoint. Additionally, for each of these leaves $\ell(X)$, there is a vertex $y(X) \in \V{K_h}$ such the bag of $y(X)$ has an edge to $\ell(X)$ and does not have an edge to any other vertex in $\phi(x)$. Thus, since the leaves $\{\ell(X) \mid X \in \mathcal{X}\}$ are all distinct, the vertices $\{y(X) \mid X \in \mathcal{X}\}$ are also all distinct.

    Now we sort the elements of $\mathcal{X}$ by containment as $X_1,\dots, X_{\Abs{\mathcal{X}}}$.
    We pair up the first $t^2(6r+2)$ elements of $\mathcal{X}$ with the last $t^2(6r+2)$ elements of $\mathcal{X}$.
    So we consider the pairs $(X_1,X_{\Abs{\mathcal{X}}})$, $(X_2,X_{\Abs{\mathcal{X}}-1})$, and so on.
    As $K_h$ is the complete graph, for every pair $(X_i,X_{\Abs{\mathcal{X}}-i+1})$ there is a path between $X_i$ and $X_{\Abs{\mathcal{X}}-i+1}$ whose vertex-set is contained in $P(X_i) \cup \phi(y(X_i)) \cup \phi(y(X_{\Abs{\mathcal{X}}-i+1})) \cup P(X_{\Abs{\mathcal{X}}-i+1})$.
    The two paths $P(X_i)$ and $P(X_{\Abs{\mathcal{X}}-i+1})$ have length at most $r-1$ since no vertex in $\mathcal{X}$ is the root of $T_x$. In each of the two bags $\phi(y(X_i))$ and $\phi(y(X_{\Abs{\mathcal{X}}-i+1}))$, we can choose paths of length at most $2r$.
    Finally, we need $3$ edges to join the parts together.
    So overall, this path between $X_i$ and $X_{\Abs{\mathcal{X}}-i+1}$ has length at most $2(r-1)+2(2r)+3=6r+1$ (see~\cref{fig:case_1}).
    Note that $\Abs{\mathcal{X}} \geq 6t^2(3r+1)= 3t^2(6r+2)\geq 2t^2(6r+2)+t(6r+2)$.
    So at least $t(6r+2)$ spheres lie between any two paired spheres in $\mathcal{X}$.
    Thus, by \cref{lem:few_short_paths_through_nested_spheres}, $G$ contains $K_{t,t}$ as a subgraph, a contradiction.
    
    \begin{figure}[!ht]
        \begin{center}
        \begin{tikzpicture}

            \begin{pgfonlayer}{foreground}
            \node (center) at (0,0) {};
            
            \node[vertex] (t-1) at ($(center)+(0,2)$) {};
            \node[vertex] (t-2) at ($(t-1)+(200:1)$) {};
            \node[vertex] (t-3) at ($(t-1)+(340:1)$) {};
            \node (t-4) at ($(t-2)+(220:0.8)$) {};
            \node[vertex] (t-5) at ($(t-4)+(360:1)$) {};
            \node[vertex] (t-6) at ($(t-5)+(360:1.2)$) {};
            \node[vertex] (t-7) at ($(t-6)+(360:1)$) {};
            \node[vertex,CornflowerBlue] (t-8) at ($(t-4)+(250:0.5)$) {};
            \node[vertex] (t-9) at ($(t-4)+(290:0.5)$) {};
            \node[vertex,CornflowerBlue] (t-10) at ($(t-5)+(250:0.5)$) {};
            \twocolouredVertex{t-11}{$(t-5)+(290:0.5)$}{LavenderMagenta}{CornflowerBlue}
            \node[vertex,CornflowerBlue] (t-12) at ($(t-6)+(250:0.5)$) {};
            \node[vertex] (t-13) at ($(t-6)+(290:0.5)$) {};
            \node[vertex,CornflowerBlue] (t-14) at ($(t-7)+(250:0.5)$) {};
            \node[vertex,CornflowerBlue] (t-15) at ($(t-7)+(290:0.5)$) {};

            \node (phi-x-label) at ($(t-3)+(80:0.5)$) {$\phi(x)$};

            \node[vertex,LavenderMagenta] (x-1) at ($(t-4)$) {};
            \node[LavenderMagenta,scale=0.7] (x-1-label) at ($(x-1)+(170:0.3)$) {$X_1$};
            \node[vertex,LavenderMagenta] (x-2) at ($(t-5)+(240:0.2)$) {};
            \node[LavenderMagenta,scale=0.7] (x-2-label) at ($(x-2)+(170:0.3)$) {$X_6$};
            \node (x-3) at ($(t-11)$) {};
            \node[LavenderMagenta,scale=0.7,align=center] (x-3-label) at ($(x-3)+(10:0.3)$) {
            $X_3$};
            \node[vertex,LavenderMagenta] (x-4) at ($(t-3)+(230:0.3)$) {};
            \node[LavenderMagenta,scale=0.7] (x-4-label) at ($(x-4)+(170:0.3)$) {$X_4$};
            \node[vertex,LavenderMagenta] (x-5) at ($(t-7)+(240:0.2)$) {};
            \node[LavenderMagenta,scale=0.7] (x-5-label) at ($(x-5)+(170:0.3)$) {$X_5$};
            \node[vertex,LavenderMagenta] (x-6) at ($(t-7)+(300:0.2)$) {};
            \node[LavenderMagenta,scale=0.7] (x-6-label) at ($(x-6)+(10:0.3)$) {$X_2$};
            \end{pgfonlayer}

            \begin{pgfonlayer}{main}
            
                \draw[edge] (t-1.center) to[curve through={(t-2) (t-4)}] (t-8);
                \draw[edge] (t-1.center) to[curve through={(t-3) (t-7)}] (t-15);
                \draw[edge] (t-4.center) to[curve through={($(t-4)!0.5!(t-9) + (45:0.05)$)}] (t-9);
                \draw[edge] (t-2.center) to[curve through={(t-5)}] (t-11);
                \draw[edge] (t-5.center) to[curve through={(x-2)}] (t-10);
                \draw[edge] (t-3.center) to[curve through={(x-4) (t-6)}] (t-12);
                \draw[edge] (t-6.center) to[curve through={($(t-6)!0.5!(t-13) + (45:0.05)$)}] (t-13);
                \draw[edge] (t-7.center) to[curve through={(x-5)}] (t-14);
            \end{pgfonlayer}

            \node (phi-y-1) at ($(center)+(-4,-1)$) {};
            \node (phi-y-2) at ($(phi-y-1)+(1.6,-0.4)$) {};
            \node (phi-y-3) at ($(phi-y-1)+(3.2,0)$) {};
            \node (phi-y-4) at ($(phi-y-3)+(1.6,-0.4)$) {};
            \node (phi-y-5) at ($(phi-y-3)+(3.2,0)$) {};
            \node (phi-y-6) at ($(phi-y-5)+(1.6,-0.4)$) {};

            \begin{pgfonlayer}{background}
                \draw[edge,AppleGreen,line width=2.5pt,out=237,in=80] (x-1) to (t-8);
                \draw[edge,AppleGreen,line width=2pt,out=237,in=80] (t-8) to ($($(phi-y-1)+(150:0.6)$)!0.3!($(phi-y-1)+(30:0.6)$)$);
                \draw[edge,AppleGreen,line width=2pt,out=237,in=80] (t-10) to ($($(phi-y-2)+(150:0.6)$)!0.7!($(phi-y-2)+(30:0.6)$)$);
                \draw[edge,AppleGreen,line width=2pt,out=60,in=120] ($($(phi-y-1)+(150:0.6)$)!0.6!($(phi-y-1)+(30:0.6)$)$) to ($($(phi-y-2)+(150:0.6)$)!0.3!($(phi-y-2)+(30:0.6)$)$);
                \draw[edge,AppleGreen,line width=2.5pt,out=250,in=85] (x-2) to (t-10);
                \draw[edge,AppleGreen,line width=1.7pt,out=270,in=100] ($($(phi-y-1)+(150:0.6)$)!0.3!($(phi-y-1)+(30:0.6)$)$) to ($(phi-y-1)+(240:0.2)$);
                \draw[edge,AppleGreen,line width=1.7pt,out=270,in=80] ($($(phi-y-1)+(150:0.6)$)!0.6!($(phi-y-1)+(30:0.6)$)$) to ($(phi-y-1)+(240:0.2)$);
                \draw[edge,AppleGreen,line width=1.7pt,out=270,in=100] ($($(phi-y-2)+(150:0.6)$)!0.3!($(phi-y-2)+(30:0.6)$)$) to ($(phi-y-2)+(280:0.2)$);
                \draw[edge,AppleGreen,line width=1.7pt,out=270,in=80] ($($(phi-y-2)+(150:0.6)$)!0.7!($(phi-y-2)+(30:0.6)$)$) to ($(phi-y-2)+(280:0.2)$);

                \draw[edge,AppleGreen,line width=2pt,out=275,in=80] (t-11.center) to ($($(phi-y-3)+(150:0.6)$)!0.3!($(phi-y-3)+(30:0.6)$)$);
                \draw[edge,AppleGreen,line width=2pt,out=280,in=80] (t-12) to ($($(phi-y-4)+(150:0.6)$)!0.7!($(phi-y-4)+(30:0.6)$)$);
                \draw[edge,AppleGreen,line width=2pt,out=60,in=120] ($($(phi-y-3)+(150:0.6)$)!0.6!($(phi-y-3)+(30:0.6)$)$) to ($($(phi-y-4)+(150:0.6)$)!0.3!($(phi-y-4)+(30:0.6)$)$);
                \draw[edge,AppleGreen,line width=2.5pt,out=250,in=70] (x-4) to (t-12);
                \draw[edge,AppleGreen,line width=1.7pt,out=270,in=100] ($($(phi-y-3)+(150:0.6)$)!0.3!($(phi-y-3)+(30:0.6)$)$) to ($(phi-y-3)+(290:0.2)$);
                \draw[edge,AppleGreen,line width=1.7pt,out=270,in=80] ($($(phi-y-3)+(150:0.6)$)!0.6!($(phi-y-3)+(30:0.6)$)$) to ($(phi-y-3)+(290:0.2)$);
                \draw[edge,AppleGreen,line width=1.7pt,out=270,in=100] ($($(phi-y-4)+(150:0.6)$)!0.3!($(phi-y-4)+(30:0.6)$)$) to ($(phi-y-4)+(280:0.1)$);
                \draw[edge,AppleGreen,line width=1.7pt,out=270,in=80] ($($(phi-y-4)+(150:0.6)$)!0.7!($(phi-y-4)+(30:0.6)$)$) to ($(phi-y-4)+(280:0.1)$);

                \draw[edge,AppleGreen,line width=2.5pt,out=250,in=85] (x-5) to (t-14);
                \draw[edge,AppleGreen,line width=2pt,out=275,in=100] (t-14) to ($($(phi-y-5)+(150:0.6)$)!0.3!($(phi-y-5)+(30:0.6)$)$);
                \draw[edge,AppleGreen,line width=2pt,out=280,in=90] (t-15) to ($($(phi-y-6)+(150:0.6)$)!0.7!($(phi-y-6)+(30:0.6)$)$);
                \draw[edge,AppleGreen,line width=2pt,out=60,in=120] ($($(phi-y-5)+(150:0.6)$)!0.6!($(phi-y-5)+(30:0.6)$)$) to ($($(phi-y-6)+(150:0.6)$)!0.3!($(phi-y-6)+(30:0.6)$)$);
                \draw[edge,AppleGreen,line width=2.5pt,out=280,in=100] (x-6) to (t-15);
                \draw[edge,AppleGreen,line width=1.7pt,out=270,in=100] ($($(phi-y-5)+(150:0.6)$)!0.3!($(phi-y-5)+(30:0.6)$)$) to ($(phi-y-5)+(250:0.2)$);
                \draw[edge,AppleGreen,line width=1.7pt,out=270,in=80] ($($(phi-y-5)+(150:0.6)$)!0.6!($(phi-y-5)+(30:0.6)$)$) to ($(phi-y-5)+(250:0.2)$);
                \draw[edge,AppleGreen,line width=1.7pt,out=270,in=100] ($($(phi-y-6)+(150:0.6)$)!0.3!($(phi-y-6)+(30:0.6)$)$) to ($(phi-y-6)+(20:0.1)$);
                \draw[edge,AppleGreen,line width=1.7pt,out=270,in=80] ($($(phi-y-6)+(150:0.6)$)!0.7!($(phi-y-6)+(30:0.6)$)$) to ($(phi-y-6)+(20:0.1)$);

            \end{pgfonlayer}

            \begin{pgfonlayer}{deepbackground}
                \foreach \i in {1,...,6}
                {
                    \fill[PastelOrange!60,rounded corners] ($(phi-y-\i)+(150:0.6)$) -- ($(phi-y-\i)+(30:0.6)$) -- ($(phi-y-\i)+(270:0.8)$) -- cycle;
                }
                \node[PastelOrange,scale=0.7] (phi-y-1-label) at ($(phi-y-1)+(270:1)$) {$\phi(y(X_{1}))$};
                \node[PastelOrange,scale=0.7] (phi-y-2-label) at ($(phi-y-2)+(270:1)$) {$\phi(y(X_{6}))$};
                \node[PastelOrange,scale=0.7] (phi-y-3-label) at ($(phi-y-3)+(270:1)$) {$\phi(y(X_{3}))$};
                \node[PastelOrange,scale=0.7] (phi-y-4-label) at ($(phi-y-4)+(270:1)$) {$\phi(y(X_{4}))$};
                \node[PastelOrange,scale=0.7] (phi-y-5-label) at ($(phi-y-5)+(270:1)$) {$\phi(y(X_{5}))$};
                \node[PastelOrange,scale=0.7] (phi-y-6-label) at ($(phi-y-6)+(270:1)$) {$\phi(y(X_{2}))$};
            \end{pgfonlayer}

        \end{tikzpicture}
        \end{center}
        \caption{This illustrates the case that many vertices of $\mathcal{N}$ lie in a single bag.
        The set \textcolor{LavenderMagenta}{$\mathcal{X}=\Set{X_1,\dots, X_6}$} is depicted together with the corresponding \textcolor{CornflowerBlue}{leaves} (note that $X_3$ is equal to its leaf).
        Using two paired bags, the \textcolor{AppleGreen}{green paths} are of length at most $6r+1$ and pairwise disjoint.
        }
        \label{fig:case_1}    
    \end{figure}
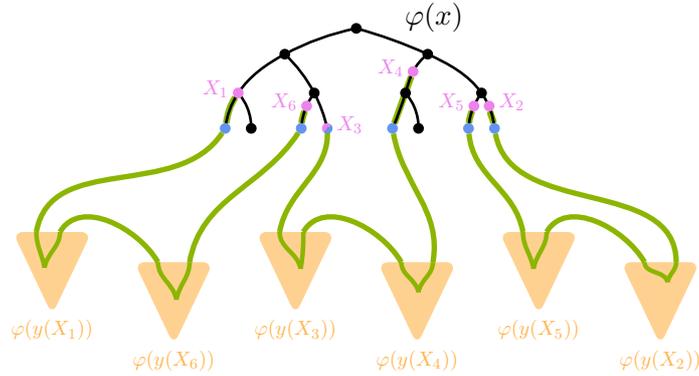

     \begin{figure}[!ht]
        \begin{center}
        \begin{tikzpicture}
    \node (centre) at (0,0) {};
    \begin{pgfonlayer}{deepbackground}
    \foreach \i in {1,...,6}
    {
        \node[vertex] (phi-\i) at ($(centre)+({360/6*\i-90}:2)$) {};
        \fill[PastelOrange!60,rounded corners] ($(phi-\i)+({360/6*\i-210}:0.7)$) -- ($(phi-\i)+({360/6*\i+30}:0.7)$) -- ($(phi-\i)+({360/6*\i-90}:1)$) -- cycle;
    }
    \end{pgfonlayer}

	\node[vertex,scale=0.8] (t-1-1) at ($(phi-1)+(330:0.6)$) {};
	\node[vertex,scale=0.8] (t-1-2) at ($(phi-1)+(450:0.4)$) {};
	\node[vertex,scale=0.8] (t-1-4) at ($(phi-1)+(210:0.4)$) {};
	\node[vertex,scale=0.8] (t-1-3) at ($(t-1-2)!0.33!(t-1-4)$) {};
	\node[vertex,scale=0.8] (t-1-5) at ($(t-1-2)!0.66!(t-1-4)$) {};
	\node[vertex,scale=0.8] (t-1-6) at ($(t-1-1)!0.66!(t-1-2)$) {};
	\node[vertex,CornflowerBlue] (t-1-7) at ($(t-1-1)!0.33!(t-1-2)$) {};

    \begin{pgfonlayer}{background}
    	\draw[edge] (t-1-1) to (t-1-2);
    	\draw[edge] (t-1-1) to (t-1-4);
    	\draw[edge] (t-1-6) to (t-1-3);
    	\draw[edge] (t-1-7) to (t-1-5);
    \end{pgfonlayer}
	
	\node[vertex,scale=0.8] (t-2-1) at ($(phi-2)+(30:0.6)$) {};
	\node[vertex,scale=0.8] (t-2-2) at ($(phi-2)+(150:0.4)$) {};
	\node[vertex,scale=0.8] (t-2-3) at ($(phi-2)+(270:0.4)$) {};
	\node[vertex,scale=0.8] (t-2-4) at ($(t-2-1)!0.6!(t-2-2)$) {};
	\node[vertex,LavenderMagenta] (t-2-5) at ($(t-2-1)!0.6!(t-2-3)$) {};
	\node[vertex,scale=0.8] (t-2-6) at ($(t-2-2)!0.33!(t-2-3)$) {};
	\node[vertex,scale=0.8] (t-2-7) at ($(t-2-2)!0.66!(t-2-3)$) {};

    \begin{pgfonlayer}{background}
    	\draw[edge] (t-2-1) to (t-2-2);
    	\draw[edge] (t-2-1) to (t-2-3);
    	\draw[edge] (t-2-4) to (t-2-6);
    	\draw[edge] (t-2-5) to (t-2-7);
    \end{pgfonlayer}
	
	\node[vertex,scale=0.8] (t-3-1) at ($(phi-3)+(90:0.6)$) {};
	\node[vertex,scale=0.8] (t-3-2) at ($(phi-3)+(210:0.4)$) {};
	\node[vertex,scale=0.8] (t-3-3) at ($(phi-3)+(330:0.4)$) {};
	\node[vertex,LavenderMagenta] (t-3-4) at ($(phi-3)+(90:0.1)$) {};
	\node[vertex,scale=0.8] (t-3-5) at ($(t-3-2)!0.33!(t-3-3)$) {};
	\node[vertex,scale=0.8] (t-3-6) at ($(t-3-2)!0.66!(t-3-3)$) {};

    \begin{pgfonlayer}{background}
    	\draw[edge] (t-3-1) to (t-3-2);
    	\draw[edge] (t-3-1) to (t-3-3);
    	\draw[edge] (t-3-1) to (t-3-4);
    	\draw[edge] (t-3-4) to (t-3-5);
    	\draw[edge] (t-3-4) to (t-3-6);
    \end{pgfonlayer}
	
	\node[vertex,scale=0.8] (t-4-1) at ($(phi-4)+(150:0.6)$) {};
	\node[vertex,scale=0.8] (t-4-2) at ($(phi-4)+(270:0.4)$) {};
	\node[vertex,scale=0.8] (t-4-3) at ($(phi-4)+(30:0.4)$) {};
	\node[vertex,scale=0.8] (t-4-4) at ($(t-4-2)!0.33!(t-4-3)$) {};
	\node[vertex,LavenderMagenta] (t-4-5) at ($(t-4-2)!0.66!(t-4-3)$) {};

    \begin{pgfonlayer}{background}
    	\draw[edge] (t-4-1) to (t-4-2);
    	\draw[edge] (t-4-1) to (t-4-3);
    	\draw[edge] (t-4-1) to (t-4-4);
    	\draw[edge] (t-4-1) to (t-4-5);
    \end{pgfonlayer}
	
	\node[vertex,CornflowerBlue] (t-5-1) at ($(phi-5)+(210:0.6)$) {};
	\node[vertex,scale=0.8] (t-5-2) at ($(phi-5)+(330:0.4)$) {};
	\node[vertex,scale=0.8] (t-5-3) at ($(phi-5)+(90:0.4)$) {};
	\node[vertex,scale=0.8] (t-5-4) at ($(t-5-2)!0.33!(t-5-3)$) {};
	\node[vertex,scale=0.8] (t-5-5) at ($(t-5-2)!0.66!(t-5-3)$) {};
	\node[vertex,scale=0.8] (t-5-6) at ($(t-5-1)!0.6!(t-5-3)$) {};

    \begin{pgfonlayer}{background}
    	\draw[edge] (t-5-1) to (t-5-2);
    	\draw[edge] (t-5-1) to (t-5-3);
    	\draw[edge] (t-5-1) to (t-5-4);
    	\draw[edge] (t-5-6) to (t-5-5);
    \end{pgfonlayer}
	
	\node[vertex,scale=0.8] (t-6-1) at ($(phi-6)+(270:0.6)$) {};
	\node[vertex,scale=0.8] (t-6-2) at ($(phi-6)+(30:0.4)$) {};
	\node[vertex,scale=0.8] (t-6-3) at ($(phi-6)+(150:0.4)$) {};
	\node[vertex,scale=0.8] (t-6-4) at ($(t-6-2)!0.33!(t-6-3)$) {};
	\node[vertex,scale=0.8] (t-6-5) at ($(t-6-2)!0.66!(t-6-3)$) {};
	\node[vertex,scale=0.8] (t-6-6) at ($(t-6-1)!0.4!(t-6-2)$) {};
	\node[vertex,CornflowerBlue] (t-6-7) at ($(t-6-6)!0.5!(t-6-5)$) {};

    \begin{pgfonlayer}{background}
    	\draw[edge] (t-6-1) to (t-6-2);
    	\draw[edge] (t-6-1) to (t-6-3);
    	\draw[edge] (t-6-6) to (t-6-5);
    	\draw[edge] (t-6-7) to (t-6-4);
    \end{pgfonlayer}

    \begin{pgfonlayer}{deepbackground}
        \draw[edge,AppleGreen,line width=1.7pt,out=330,in=40] (t-4-5) to (t-5-5);
        \draw[edge,AppleGreen,line width=3pt] (t-5-5) to (t-5-6);
        \draw[edge,AppleGreen,line width=3pt] (t-5-6) to (t-5-1);

        \draw[edge,AppleGreen,line width=1.7pt,out=260,in=80] (t-3-2) to (t-6-4);
        \draw[edge,AppleGreen,line width=3pt] (t-3-2) to (t-3-1);
        \draw[edge,AppleGreen,line width=3pt] (t-3-1) to (t-3-4);
        \draw[edge,AppleGreen,line width=3pt] (t-6-4) to (t-6-7);

        \draw[edge,AppleGreen,line width=1.7pt,out=200,in=120] (t-2-6) to (t-1-3);
        \draw[edge,AppleGreen,line width=3pt] (t-2-4) to (t-2-6);
        \draw[edge,AppleGreen,line width=3pt] (t-2-4) to (t-2-1);
        \draw[edge,AppleGreen,line width=3pt] (t-2-1) to (t-2-5);
        \draw[edge,AppleGreen,line width=3pt] (t-1-3) to (t-1-6);
        \draw[edge,AppleGreen,line width=3pt] (t-1-6) to (t-1-7);
    \end{pgfonlayer}
\end{tikzpicture}
        \end{center}
        \caption{This illustrates the case  that many vertices of $\mathcal{N}$ lie in different bags.
        The \textcolor{LavenderMagenta}{outermost spheres} are mapped to the \textcolor{CornflowerBlue}{innermost spheres} such that the \textcolor{AppleGreen}{green paths} (all of length at most $4r+2$) have to cross the spheres lying in between.}
        \label{fig:case_2}
    \end{figure}
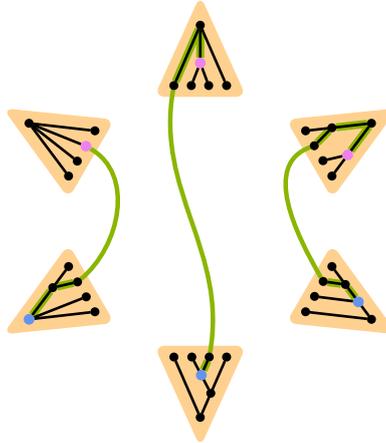

    Second, consider the case that there exists a set $\mathcal{X}\subseteq \mathcal{N}$ of size at least $6t^2(3r+1)$ so that every pair of vertices in $\mathcal{X}$ lie in different bags of $\phi$.
    So for every $X \in \mathcal{X}$, there exists a vertex $y(X) \in \V{K_h}$ such that $X$ lies in $\phi(y(X))$.
    Again, we sort the elements of $\mathcal{X}$ by containment as $X_1,\dots, X_{\Abs{\mathcal{X}}}$ and pair the first and last elements.
    This time, we pair the first $t(4r+2)$ elements of $\mathcal{X}$ with the last $t(4r+2)$ elements of $\mathcal{X}$.
    So for $1 \leq i \leq t(4r+2)$, the sphere $X_i$ is paired to the sphere $X_{\Abs{\mathcal{X}}-i+1}$.
    As $K_h$ is the complete graph, there exists a path between $X_i$ and $X_{\Abs{\mathcal{X}}-i+1}$ of length at most $4r+1$ in the subgraph of $G$ induced by $\phi(y(X_i)) \cup \phi(y(X_{\Abs{\mathcal{X}}-i+1}))$ (see \cref{fig:case_2}).
    Additionally, $\Abs{\mathcal{X}}\geq 6t^2(3r+1)= 3t^2(6r+2) \geq 2t(4r+2)+t^2(4r+2)$. 
    Thus, at least $t^2(4r+2)$ spheres lie between any two paired spheres in $\mathcal{X}$.
    Thus, again by \cref{lem:few_short_paths_through_nested_spheres}, $G$ contains $K_{t,t}$ as a subgraph, a contradiction.

    One of the two cases must occur, and so this completes the proof of \cref{lem:minimal_shallow_model_bounded_nestedness}.
\end{proof}

It is known, as seen in \cref{lem:ball-graphs-ply-scol}, that intersection graphs of balls of small ply have bounded strong colouring number.
We make use of this to show that we can obtain a similar result for sphere intersection graphs excluding a $K_{t,t}$-subgraph, assuming they can be represented without a large set of nested spheres.

\begin{lemma}
    \label{lem:bounded_nestedness_bounded_scol}
    Let $\mathcal{S}$ be a collection of spheres in $\R^d$ whose intersection graph $G=G(\mathcal{S})$ does not contain $K_{t,t}$ as a subgraph. Suppose that every nested subset of $\mathcal{S}$ has size at most $k$. Then for every $r \in \N$, the $r$-strong colouring number of $G$ is at most $2kt(2r+2)^d$.
\end{lemma}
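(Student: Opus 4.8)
\section*{Proof sketch}

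The plan is to reduce to the setting of \emph{ball} intersection graphs and then invoke \cref{lem:ball-graphs-ply-scol}. To each sphere $S\in\mathcal{S}$ associate its closed ball $\ball{S}$, and set $\mathcal{B}\coloneqq\{\ball{S}\mid S\in\mathcal{S}\}$. Since $S\cap S'\neq\emptyset$ implies $\ball{S}\cap\ball{S'}\neq\emptyset$, the graph $G=G(\mathcal{S})$ is a spanning subgraph of the ball intersection graph $G(\mathcal{B})$. The $r$-strong colouring number is monotone under taking spanning subgraphs: for any fixed vertex ordering, a path witnessing that $u$ is strongly $r$-reachable from $v$ in $G$ is also such a path in $G(\mathcal{B})$, so the $r$-width of every ordering of $G$ is at most its $r$-width in $G(\mathcal{B})$, and hence $\scol{r}{G}\leq\scol{r}{G(\mathcal{B})}$. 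Thus it suffices to bound $\scol{r}{G(\mathcal{B})}$, and by \cref{lem:ball-graphs-ply-scol} it is enough to show that $\mathcal{B}$ has ply at most $2kt$.

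So fix a point $p\in\R^d$ and let $\mathcal{S}_p\coloneqq\{S\in\mathcal{S}\mid p\in\ball{S}\}$; I must show $|\mathcal{S}_p|\leq 2kt$. Order $\mathcal{S}_p$ by strict interior containment: write $S\prec S'$ when $\ball{S}$ is contained in the interior of $\ball{S'}$. This is a strict partial order, and any chain in it is in particular a chain under closed-ball containment, hence a nested subset of $\mathcal{S}$, so it has size at most $k$. Next I claim that any antichain $\mathcal{A}$ of $(\mathcal{S}_p,\prec)$ is a clique of $G$. Indeed, if $S,S'\in\mathcal{A}$ are distinct then $\ball{S}\cap\ball{S'}\neq\emptyset$ (both contain $p$) and neither ball lies in the interior of the other, which forces the surfaces $S$ and $S'$ to intersect (see the geometric observation below), so $SS'\in\E{G}$. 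Since $G$ contains no $K_{t,t}$ subgraph and $K_{2t}$ does contain $K_{t,t}$ as a subgraph, every clique of $G$ has at most $2t-1$ vertices; hence $|\mathcal{A}|\leq 2t-1$. By Mirsky's theorem, $\mathcal{S}_p$ is a union of at most $k$ antichains, so $|\mathcal{S}_p|\leq k(2t-1)\leq 2kt$, as required. Feeding this ply bound into \cref{lem:ball-graphs-ply-scol} gives $\scol{r}{G(\mathcal{B})}\leq 2kt(2r+2)^d$, which transfers to $G$ by the monotonicity above.

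The one genuinely geometric ingredient is the claim that two distinct spheres $S,S'$ with $\ball{S}\cap\ball{S'}\neq\emptyset$, neither ball interior to the other, have intersecting surfaces; this is where I expect to have to be careful. I would argue it by topology of sphere surfaces: if $S\cap S'=\emptyset$, then $S'$ is connected and disjoint from $S$, so $S'$ lies in one component of $\R^d\setminus S$. Either $S'\subseteq\mathrm{int}(\ball{S})$, which forces $\ball{S'}\subseteq\mathrm{int}(\ball{S})$ by convexity of the open ball, or $S'$ lies in the exterior of $\ball{S}$; in the latter case the same reasoning with the roles of $S$ and $S'$ exchanged shows that either $\ball{S}\subseteq\mathrm{int}(\ball{S'})$ or $\ball{S}\cap\ball{S'}=\emptyset$. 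All three outcomes are excluded under our hypotheses, so $S\cap S'\neq\emptyset$. Everything else is routine bookkeeping.
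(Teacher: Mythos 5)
Your proposal is correct and follows essentially the same route as the paper: bound the ply of the associated balls by $2kt$ via a chain/antichain argument in the containment poset (chains are nested, hence of size at most $k$; antichains form cliques, hence of size less than $2t$ by $K_{t,t}$-freeness) and then invoke \cref{lem:ball-graphs-ply-scol}. The only difference is that you spell out two steps the paper leaves implicit, namely the monotonicity of the strong colouring number under passing from $G(\mathcal{B})$ to its spanning subgraph $G(\mathcal{S})$ and the geometric fact that overlapping, non-nested balls have intersecting boundary spheres, both of which you handle correctly.
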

\begin{proof}
    We claim that the collection $\{\ball{S}\mid S \in \mathcal{S}\}$ of balls in $\R^d$ has ply at most $2kt$. This yields the desired statement by~\cref{lem:ball-graphs-ply-scol}.

    So, let $\mathcal{X} \subseteq \mathcal{S}$ be a collection of spheres such that their common intersection as balls is non-empty, that is, such that $\bigcap_{X \in \mathcal{X}} \ball{X} \neq \emptyset$.
    Let us consider the poset $\preceq$ on $\mathcal{X}$ where for any spheres $X, X' \in \mathcal{X}$, we have $X \preceq X'$ if $X$ is contained in $X'$.
    Every chain in this poset has size at most $k$ by the assumption about nested sets.
    Next, note that a single point in $\R^d$ lies in at most $2t$ pairwise incomparable (according to $\preceq$) spheres in $\mathcal{X}$, as otherwise, $G$ would contain a clique of size $2t$ and therefore have $K_{t,t}$ as a subgraph.
    Thus, every antichain in this poset has size at most $2t$, and the \namecref{lem:bounded_nestedness_bounded_scol} follows.
\end{proof}

The penultimate step is to combine \cref{lem:minimal_shallow_model_bounded_nestedness,lem:bounded_nestedness_bounded_scol} with \cref{obs:shallow-minor_implies_large_scol} in order to exclude shallow minors from the class $\SphereInt{d}_t$.

\begin{lemma}
\label{lem:forbidShallowMinor}
For any integers $t,d \geq 2$ and any $r \in \N$, no graph $G\in\SphereInt{d}_t$ has a clique of size $72t^5(3r+2)^{d+3}$ as an $r$-shallow minor.
\end{lemma}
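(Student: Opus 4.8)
The plan is to prove \cref{lem:forbidShallowMinor} by contradiction, feeding the conclusion of \cref{lem:minimal_shallow_model_bounded_nestedness} into \cref{lem:bounded_nestedness_bounded_scol} and then colliding the outcome with \cref{obs:shallow-minor_implies_large_scol}. Set $h\coloneqq 72t^5(3r+2)^{d+3}$ and suppose for contradiction that some graph in $\SphereInt{d}_t$ contains $K_h$ as an $r$-shallow minor. The first step is to pass to a vertex-minimal counterexample: since both conditions defining $\SphereInt{d}_t$ — sphere dimension at most $d$ and containing no $K_{t,t}$-subgraph — are preserved under taking induced subgraphs, there is a graph $G\in\SphereInt{d}_t$ with as few vertices as possible subject to containing an $r$-shallow $K_h$-minor, and this is precisely the hypothesis under which \cref{lem:minimal_shallow_model_bounded_nestedness} applies.

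Next I would fix any collection $\mathcal{S}$ of spheres in $\R^d$ with $G=G(\mathcal{S})$. By \cref{lem:minimal_shallow_model_bounded_nestedness}, every nested subset of $\mathcal{S}$ has size at most $k\coloneqq 36t^4(3r+1)^3$. Since $G$ contains no $K_{t,t}$-subgraph, \cref{lem:bounded_nestedness_bounded_scol} applies to $\mathcal{S}$; invoking it with radius $4r+1$ gives
\[
    \scol{4r+1}{G}\;\le\;2kt\bigl(2(4r+1)+2\bigr)^d\;=\;72\,t^5(3r+1)^3(8r+4)^d.
\]
On the other hand, as $G$ contains an $r$-shallow $K_h$-minor, \cref{obs:shallow-minor_implies_large_scol} yields $\scol{4r+1}{G}\ge h-1$. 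Comparing the two bounds and simplifying the closed-form expression on the right (using crude estimates such as $3r+1\le 3r+2$ and $8r+4\le c\,(3r+2)$ for a small absolute constant $c$) shows that $h-1$ cannot be as large as claimed, giving the desired contradiction and hence the lemma.

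Conceptually this statement is just the assembly of the three preceding results, so I do not anticipate a genuine obstacle; the one step that requires care is the reduction to a minimal counterexample, because \cref{lem:minimal_shallow_model_bounded_nestedness} controls the nesting of a sphere representation \emph{only} under that minimality assumption, so it matters both that we minimise the vertex count and that $\SphereInt{d}_t$ is closed under vertex deletion. The other choice to get right is the radius $4r+1$ at which \cref{lem:bounded_nestedness_bounded_scol} is applied, which must match the radius appearing in \cref{obs:shallow-minor_implies_large_scol} so that the upper and lower bounds on $\scol{4r+1}{G}$ can be played against each other; everything after that is routine bookkeeping with the explicit constants.
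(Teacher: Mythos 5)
Your plan is exactly the paper's proof: pass to a vertex-minimal graph in $\SphereInt{d}_t$ containing the $r$-shallow $K_h$-minor (using closure of the class under induced subgraphs), bound the nesting of any sphere representation via \cref{lem:minimal_shallow_model_bounded_nestedness}, convert that into a strong colouring number bound via \cref{lem:bounded_nestedness_bounded_scol}, and collide it with \cref{obs:shallow-minor_implies_large_scol} at radius $4r+1$. The gap is precisely in the step you dismiss as routine bookkeeping. Your (correct) application of \cref{lem:bounded_nestedness_bounded_scol} at radius $4r+1$ gives $\scol{4r+1}{G}\le 72t^5(3r+1)^3(8r+4)^d$, and for a contradiction this must be strictly smaller than $h-1=72t^5(3r+2)^{d+3}-1$. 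It is not: your estimate $8r+4\le c(3r+2)$ only bounds the colouring number from above by $72c^dt^5(3r+2)^{d+3}$, which is \emph{larger} than $h-1$, so nothing collides. Concretely, for $d=2$, $r=1$, $t=2$ the upper bound is $72\cdot 2^5\cdot 4^3\cdot 12^2\approx 2.1\times 10^7$, while $h-1=72\cdot 2^5\cdot 5^5-1\approx 7.2\times 10^6$; since $(8r+4)/(3r+2)\ge 12/5$ for all $r\ge 1$, the mismatch persists for every $r$ and every $d\ge 2$, so the stated constant cannot be reached by this chain of inequalities as it stands.

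You should know that the paper's own write-up conceals the same issue behind a substitution slip: it asserts that \cref{lem:bounded_nestedness_bounded_scol} bounds the $(4r+1)$-strong colouring number by $2\bigl(36t^4(3r+1)^3\bigr)t(2r+2)^d$, i.e.\ with $(2r+2)^d$ in place of $(2(4r+1)+2)^d=(8r+4)^d$, and only with that factor does its displayed inequality $\le h-2$ close. (Applying the lemma at radius $r$ instead would not help, since $\scol{\rho}{G}$ is non-decreasing in $\rho$, so a bound on $\scol{r}{G}$ says nothing about $\scol{4r+1}{G}$.) The honest output of the argument — yours and the paper's — is the lemma with a somewhat larger clique size, e.g.\ $72t^5(3r+1)^3(8r+4)^d+2$, which is still $\mathcal{O}_d(t^5r^{d+3})$ and therefore serves the proof of \cref{thm seps} just as well. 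So your approach is the right one, but as written the final comparison fails for the constant in the statement; either prove the lemma with the enlarged constant (the downstream use is unaffected asymptotically) or sharpen one of the intermediate bounds rather than appealing to crude estimates whose constant factor $c^d$ the stated $h$ does not absorb.
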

    \begin{proof} For convenience, we set $h=72t^5(3r+2)^{d+3}$.
    Suppose for a contradiction that there is a graph $G \in \SphereInt{d}_t$ which contains $K_h$ as an $r$-shallow minor. Choose such a graph $G$ with as few vertices as possible. Let $\mathcal{S}$ be a collection of spheres in $\R^d$ such that $G = G(\mathcal{S})$.
    Then by \cref{lem:minimal_shallow_model_bounded_nestedness}, every nested subset of $\mathcal{S}$ has size at most $36t^4(3r+1)^3$.
    So by \cref{lem:bounded_nestedness_bounded_scol}, the $(4r+1)$-strong colouring number of $G$ is at most $2(36t^4(3r+1)^3)t(2r+2)^d$.
    Note that
    \begin{equation*}
        2(36t^4(3r+1)^3)t(2r+2)^d = 72t^5(3r+1)^3(2r+2)^d \leq 72t^5(3r+2)^{d+3}-2 = h-2.
    \end{equation*}
    
    Finally, by \cref{obs:shallow-minor_implies_large_scol}, the $(4r+1)$-strong colouring number of $G$ is at least $h-1$. So $h-1 \leq \scol{4r+1}{G}  \leq h-2$, a contradiction.
\end{proof}

Now we can combine our results with the theorem of Plotkin, Rao, and Smith~\cite{PRS} (stated as \cref{thm:PRS}) to obtain strongly sublinear separators in graphs of sphere dimension at most $d$ which exclude a $K_{t,t}$-subgraph. We restate the theorem below for convenience.

\StronglySublinearSeparatorsTheorem*
\begin{proof}
    Let $G \in \SphereInt{d}_t$. For convenience, set $r = n^{\frac{1}{2d+8}}$. (We omit floors and ceilings for the sake of readability when they do not affect the proof.) By \cref{lem:forbidShallowMinor}, $G$ does not have a clique of size $72t^5(3r+2)^{d+3}$ as an $r$-shallow minor. Since $72t^5(3r+2)^{d+3} \in \mathcal{O}_d(t^5r^{d+3})$, by \cref{thm:PRS} we have that $G$ contains a balanced separator of size
    \begin{equation*}
        \mathcal{O}_d\left(rt^{10}r^{2d+6}\log{n}+\frac{n}{r} \right) = \bigOlogd{}\left(t^{10}r^{2d+7}+n^{1-\frac{1}{2d+8}}\right) = \bigOlogd{}\left(t^{10}n^{1-\frac{1}{2d+8}}\right).\qedhere
    \end{equation*}
\end{proof}

\section{Asymptotic dimension}
\label{sec:asymp_dim}

\newcommand{\IntGraphs}[2]{\mathcal{C}^{#1}_{#2}} 
\newcommand{\WeightedIntGraphs}[2]{\hat{\mathcal{C}}^{#1}_{#2}} 
\newcommand{\CompIntGraphs}[2]{\mathcal{B}^{#1}_{#2}} 

In this section, we prove that $\SphereInt{d}$ has bounded asymptotic dimension. 
In fact, we prove that the following more general class of graphs has bounded asymptotic dimension.
Let $\IntGraphs{d}{\alpha}$ be the class of connected intersection graphs of systems of path-connected sets in $\R^d$, each of which is obtained from a compact convex set of aspect ratio at most $\alpha$ by removing some subset of its interior.
Clearly, the class $\SphereInt{d}$ of intersection graphs of spheres in $\R^d$ is a subclass of $\IntGraphs{d}{\alpha}$.
If the sets are restricted so that none of their interiors is removed, then this subclass was proved to have bounded asymptotic dimension by Dvo{\v{r}}{\'a}k and Norin \cite{dvovrak2022asymptotic}.
This theorem shall be one of our main tools. 

\begin{theorem}[\cite{dvovrak2022asymptotic}]\label{thm:convexasdim} 
    For every positive integer $d$ and every $\alpha \ge 1$, the class of intersection graphs of systems of compact convex sets of aspect ratio at most $\alpha$ in $\R^d$ has asymptotic dimension at most $2d + 1$.
\end{theorem}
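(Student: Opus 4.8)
The plan is to prove \cref{thm:convexasdim} by an explicit colouring argument modelled on the classical proof that $\asdim(\R^d)\le d$. Fix a system $\{C_v\}$ of compact convex sets of aspect ratio at most $\alpha$ in $\R^d$, with intersection graph $G$. The only place I would invoke the aspect-ratio bound is through the following local fact (immediate from John's ellipsoid): if $C_u\cap C_v\neq\emptyset$ and $\operatorname{diam}(C_u)$ and $\operatorname{diam}(C_v)$ agree up to a factor of $2$, then $C_u\cup C_v$ is contained in a single ball of radius $O_\alpha(\operatorname{diam}(C_u))$. Hence, at any fixed scale, a path of length $\ell$ in $G$ that uses only sets of that scale is spatially confined to distance $O_\alpha(\ell\cdot 2^{\mathrm{scale}})$; this is what lets the $\R^d$-geometry control the graph metric $\dist{G}{\cdot,\cdot}$ within one scale.

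Assign to each $v$ the integer scale $s(v)=\lfloor\log_2\operatorname{diam}(C_v)\rfloor$. Given $r$, the goal is a cover of $V(G)$ by $2d+2$ families that are $r$-disjoint and $D(r)$-bounded in $\dist{G}{\cdot,\cdot}$. The backbone is the standard gadget: a randomly shifted axis-parallel grid in $\R^d$ of side $\ell$, with its cells coloured by the usual $(d+1)$-colour staircase pattern, yields $d+1$ colour classes that are pairwise $\Omega(\ell)$-separated in $\R^d$ and $O_d(\ell)$-bounded. I would group the scales into blocks of $M$ consecutive scales, use one grid of side $\ell\approx C(M)\cdot r\cdot 2^{(\mathrm{top\ of\ block})}$ per block, and additionally $2$-colour the blocks by parity; by the local fact, this produces a $2(d+1)$-colouring that correctly controls every intersection between two sets whose scales differ by at most $M$, once $M$ is taken large relative to $r$.

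The crux --- and the reason the bound is $2d+1$ rather than $d+1$ --- is that a single edge of $G$ may join a tiny set to a huge one, so a path of length at most $r$ in $G$ can jump across unboundedly many scales, and the block-parity colouring above does \emph{not} make different blocks far apart in $\dist{G}{\cdot,\cdot}$. Such cross-scale edges do, however, force geometric containment (the small set lies inside the large one up to an $O_\alpha(1)$ dilation), so they organise the vertices into a laminar-type hierarchy in $\R^d$. The hard part, which I expect to dominate the argument, is to colour this hierarchy with a further $O(d)$ colours and to fuse this with the grid colouring into a single $(2d+2)$-colouring whose classes are \emph{simultaneously} $r$-disjoint and $D(r)$-bounded in the graph metric. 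I would attempt this by interleaving all parameters ($M$, the grid sides, and the hierarchy-colouring scale) as explicit functions of $r$, and by arguing that any $G$-path of length at most $r$ either stays inside a single branch of the hierarchy --- so it is caught by the hierarchy colours --- or, once restricted to one branch, stays within $O_d(1)$ consecutive scale-blocks, so it is caught by the grid colours. Verifying that these two regimes really do cover all cases, with compatible quantitative bounds, is the main obstacle.
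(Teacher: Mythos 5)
This statement is not proved in the paper at all: it is quoted verbatim from Dvo\v{r}\'ak and Norin \cite{dvovrak2022asymptotic} and used as a black box, so the only thing to assess is whether your sketch would constitute an independent proof. It would not, and you essentially say so yourself: the step you defer (``the main obstacle'') is the entire content of the theorem. Your grid-plus-scale-blocks machinery only controls pairs of sets whose diameters differ by a bounded factor. But in an intersection graph of convex sets there is no control whatsoever on the scale jump across a single edge: a huge set may intersect arbitrarily many pairwise far-apart tiny sets, so vertices at wildly different scales are at graph distance $1$ (or $2$, through such a ``hub''), the spatial-confinement fact you rely on fails across scales, and block-parity in the scale direction cannot make classes $r$-disjoint. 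Equivalently, the implicit projection $v\mapsto\log_2\operatorname{diam}(C_v)$ is not $1$-Lipschitz for the graph metric, so scale cannot be used as a layering coordinate without a substantial new idea; and the ``laminar-type hierarchy'' you invoke is not established --- a cross-scale intersection forces the small set into an $O_\alpha(1)$-neighbourhood of the large one, not a nested family, and nothing prevents incomparable overlaps.

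There is also a quantitative problem with the colour budget even at the level of the plan. Asymptotic dimension at most $2d+1$ means $2d+2$ families, and your grid staircase times block parity already uses $2(d+1)=2d+2$ of them; adding ``a further $O(d)$ colours'' for the hierarchy and then ``fusing'' cannot stay within the budget by any generic product construction (which would give $O(d^2)$ classes), and no fusion mechanism is described. So the proposal, as written, neither handles cross-scale adjacencies nor reaches the claimed bound; the case analysis ``a short path stays in one branch or in $O_d(1)$ scale-blocks'' is exactly the dichotomy that is false without further ideas, because short paths through large hubs do neither. If you want a complete argument along roughly these lines, you would need the actual mechanism of \cite{dvovrak2022asymptotic} (or, at the cost of a worse bound, a layering theorem such as \cref{thm:layer} applied with a genuinely Lipschitz projection), rather than the scale-layering heuristic sketched here.
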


We introduce some notations similar to those used for spheres as in \cref{sec:SSS}. 
For a graph $G\in \IntGraphs{d}{\alpha}$ and a vertex $x \in\V{G}$ we refer by $\closure{x}$ to the convex closure of $x$.
We say that a vertex $y$ \emph{contains} a different vertex $x$ if $\closure{y}$ contains $\closure{x}$.
We say the vertex $x$ is \emph{maximal} if there is no $y \in \V{G} \backslash \{x\}$ such that $y$ contains $x$.

The \emph{height} of a bounded convex set $X\subset \R^d$ is the infimum $h
\geq 0$ such that $X$ is contained between two parallel hyperplanes at distance $h$.
The \emph{aspect ratio} of $X$ is the ratio of the diameter of $X$ over its \emph{height}. Let $\CompIntGraphs{d}{\alpha}$ denote the class of intersection graphs of systems of compact convex sets of aspect ratio at most $\alpha$ in $\R^d$. 
If $G\in \IntGraphs{d}{\alpha}$ contains no pair of distinct vertices $x,y$ with $y$ containing $x$, then $G$ belongs to $\CompIntGraphs{d}{\alpha}$, i.e.~\cref{thm:convexasdim} applies. 

\medskip
Our next main tool is a theorem of Bonamy, Bousquet, Esperet, Groenland, Liu, Pirot, and Scott \cite{bonamy2023asymptotic} that shall allow us to apply \cref{thm:convexasdim}.
Before introducing this theorem, we require some further definitions.

A \emph{weighted graph} $(G, \weight)$ consists of a graph $G$ and a function $\weight: E(G) \to {\R}^+$.
We call $\weight(e)$ the \emph{weight} of $e$ for each $e \in E(G)$. The length in $(G, \weight)$ of a path $P$ in $G$ is the sum of the weights of the edges of $P$. Given two vertices $x, y \in V(G)$, we define $\dist{(G, \weight)}{x,y}$ to be the infimum of the length in $(G, \weight)$ of a path between $x$ and $y$; we define $\dist{(G, \weight)}{x,y} = \infty$ if there exists no path between $x$ and $y$.
We remark that all the weighted graphs we consider are actually graphs whose edges have weights $1$ or $2$.
Given $A\subseteq V(G)$, we let $G[A]$ denote the \emph{induced subgraph} of $G$ on vertex set $A$. In other words, $G[A]$ is the subgraph of $G$ obtained by restricting to the vertex set $A$. For weighted graphs, we define $(G,\weight)[A]$ similarly.

A \emph{real projection} of a (weighted) graph $(G,\weight)$ is a function $p: V(G) \to \R$ such that $|p(x)-p(y)|\le \dist{}{x,y}$ for every pair of vertices $x,y\in V(G)$.
Given $S>0$ and a real projection $p$ of a weighted graph $G$, we say that a vertex set $A\subseteq V(G)$ is \emph{$(p,S)$-bounded} if $\max_{x,y\in A}|p(x)-p(y)| \le S$.
Given a class $\mathcal{C}$ of weighted graphs and a sequence $\mathcal{L}=(\mathcal{L}^1, \mathcal{L}^2. \ldots )$ of classes of weighted
graphs with $\mathcal{L}^1 \subseteq \mathcal{L}^2 \subseteq \cdots $, we say that $\mathcal{C}$ is \emph{$\mathcal{L}$-layerable} if there is a function $f:\R^+ \to \N$ such that any weighted graph $(G,\weight) \in \mathcal{C}$ has a real projection $p: V(G) \to \R$ such that for any $S > 0$, any maximal $(p,S)$-bounded set in $(G,\weight)$ induces a (weighted) subgraph that belongs to $\mathcal{L}^{f(S)}$.

\begin{theorem}[{\cite[Theorem 4.3]{bonamy2023asymptotic}}]\label{thm:layer}
     Let $\mathcal{L} =(\mathcal{L}_1, \mathcal{L}_2, \ldots )$ be a sequence of classes of weighted graphs of asymptotic dimension at most $n$. Let $\mathcal{C}$ be an $\mathcal{L}$-layerable class of weighted graphs.
     Then the asymptotic dimension of $\mathcal{C}$ is at most $n + 1$.
\end{theorem}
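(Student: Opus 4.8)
The plan is to prove this by a Hurewicz-type ``fibring over $\R$'' argument: the real projection supplied by $\mathcal{L}$-layerability presents every $(G,\weight)\in\mathcal{C}$ as coarsely fibred over $\R$, with fibres lying in some $\mathcal{L}^{f(S)}$ and hence of asymptotic dimension at most $n$, so that one pays exactly $\asdim(\R)=1$ for the base. Fix $(G,\weight)\in\mathcal{C}$, its real projection $p\colon V(G)\to\R$, and a target radius $r>0$; the goal is to cover $V(G)$ by $n+2$ families that are $r$-disjoint and bounded by a function of $r$ alone. Recall that $p$ is $1$-Lipschitz, i.e.\ $|p(x)-p(y)|\le \dist{(G,\weight)}{x,y}$.

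First I would slice the base. Choose a width $S=S(r)$ much larger than $r$, and take two interleaved families $\mathcal{I}_0,\mathcal{I}_1$ of closed intervals, each of length $S$, such that $\mathcal{I}_0\cup\mathcal{I}_1$ covers $\R$ while any two distinct intervals of the same family are more than $r$ apart. For an interval $I$ in either family, let $I^{+}$ be $I$ enlarged by $r$ on each side, and let $M_I$ be a maximal $(p,S')$-bounded set containing $p^{-1}(I^{+})$, where $S'=S+2r$; note $p^{-1}(I^{+})$ is $(p,S')$-bounded, so such an $M_I$ exists. By $\mathcal{L}$-layerability, $(G,\weight)[M_I]\in\mathcal{L}^{f(S')}$, a class of asymptotic dimension at most $n$, so $M_I$ has, at radius $r$, a cover by $n+1$ families that are $r$-disjoint in the metric of $(G,\weight)[M_I]$ and bounded by the control function of $\mathcal{L}^{f(S')}$ at $r$.

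Two things then need care. The first — the one particular to this setting — is the passage from the intrinsic metric of the induced subgraph $(G,\weight)[M_I]$ to the metric of $(G,\weight)$: the former dominates the latter, so a cover that is $r$-disjoint in $(G,\weight)[M_I]$ need not be $r$-disjoint in $(G,\weight)$. However, since $p$ is $1$-Lipschitz, any geodesic of $(G,\weight)$ of length at most $r$ between two vertices of the un-enlarged slice $p^{-1}(I)$ keeps every vertex within $p$-distance $r$ of an endpoint, hence inside $p^{-1}(I^{+})\subseteq M_I$; so the two metrics coincide at radius $r$ on $p^{-1}(I)$, and therefore the $n+1$ families restrict to families of subsets of $p^{-1}(I)$ that cover $p^{-1}(I)$ and are $r$-disjoint in $(G,\weight)$. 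Because $p$ is $1$-Lipschitz and distinct intervals of $\mathcal{I}_0$ are more than $r$ apart, the sets $\{p^{-1}(I):I\in\mathcal{I}_0\}$ are pairwise more than $r$ apart in $(G,\weight)$; hence these restricted covers amalgamate family-by-family into a cover of $p^{-1}(\bigcup\mathcal{I}_0)$ by $n+1$ $r$-disjoint bounded families, and likewise for $\mathcal{I}_1$.

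This brings us to the second, decisive point. We have $V(G)=p^{-1}(\bigcup\mathcal{I}_0)\cup p^{-1}(\bigcup\mathcal{I}_1)$, a union of two sets each covered by $n+1$ $r$-disjoint bounded families, and these must be merged into a cover by only $n+2$ families, not the $2(n+1)$ that a naive union would produce. This amalgamation — paying a single extra dimension for the base $\R$ — is exactly the content of the finite-union theorem for asymptotic dimension and of the Hurewicz-type theorem of Bell and Dranishnikov, here applied to the $1$-Lipschitz map $p\colon(G,\weight)\to\R$ and $\asdim(\R)=1$; combined with tracking how the width $S(r)$ relates to the control functions of the finitely many classes $\mathcal{L}^{f(S')}$ that arise, it yields an $(n+1)$-dimensional control function for $\mathcal{C}$. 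I expect this merge to be the hard part: that fibring over $\R$ costs $+1$ and not $+(n+1)$. That the $+1$ is sharp, and that no purely slice-wise colouring can beat $2(n+1)$, is already visible for $\mathbb{Z}^2$ with $p$ the first coordinate, whose slices are quasi-isometric to $\mathbb{Z}$ and hence of asymptotic dimension $1$ while $\asdim(\mathbb{Z}^2)=2$. Everything else — the interval arithmetic, the metric reconciliation, and the composition of control functions — is routine bookkeeping.
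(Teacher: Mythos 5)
First, note that the paper does not prove this statement at all: it is imported verbatim as Theorem~4.3 of \cite{bonamy2023asymptotic} and used as a black box, so there is no internal proof to compare against. Judged on its own, your write-up gets the framing and the genuinely setting-specific points right: the theorem is indeed a discrete Hurewicz-type ``fibring over $\mathbb{R}$'' statement for the $1$-Lipschitz projection $p$, and your enlargement trick (passing from $p^{-1}(I)$ to $p^{-1}(I^{+})$ inside a maximal $(p,S+2r)$-bounded set, so that $G$-geodesics of length at most $r$ between vertices of $p^{-1}(I)$ stay in the induced subgraph) correctly reconciles the induced-subgraph metric, in which the layerability hypothesis gives covers, with the ambient metric of $(G,\weight)$, in which the covers must be $r$-disjoint. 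The uniformity bookkeeping ($S'=S(r)+2r$, hence a single class $\mathcal{L}^{f(S')}$ and a single control value per scale $r$) is also fine.

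The genuine gap is exactly the step you flag and then outsource: turning two $(n+1)$-coloured slab covers into an $(n+2)$-coloured cover of all of $V(G)$. The finite union theorem cannot do this. It applies to a decomposition $V(G)=A\cup B$ fixed in advance, whereas your slabs have width $S(r)$ and gaps of order $r$, so the decomposition changes with the scale; moreover the union theorem needs one of the two covers to be disjoint at a scale exceeding the diameter bound of the other, which is not arranged here (both covers live at the same scale $r$ with the same bound). And invoking the Bell--Dranishnikov Hurewicz-type theorem is, in this setting, invoking a statement essentially equivalent to (and at least as deep as) the one to be proved, and moreover you would need a uniform, control-function version valid for a whole class rather than a single space. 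Your side remark that ``no purely slice-wise colouring can beat $2(n+1)$'' is also not right as stated -- the standard $3$-coloured brick-type covers of $\mathbb{Z}^2$ are slab-respecting -- and in fact the actual proof in \cite{bonamy2023asymptotic} succeeds precisely by such a coordination: it constructs the $(n+2)$-coloured cover directly, reusing the same $n+1$ colours in consecutive layers and amalgamating nearby pieces across layers, with one extra colour absorbing the interface, all with explicit control functions. So as written the proposal is a correct reduction plus an unproved core, not a proof; to complete it you must either carry out that cross-layer amalgamation or cite a uniform Hurewicz-type theorem and verify its hypotheses in the class setting.
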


We use the well-known fact that quasi-isometric classes of graphs have equal asymptotic dimension.
A \emph{$(\Pi,\Sigma)$-quasi-isometry} between (weighted) graphs $(G,\weight)$ and $(H, \psi)$ is a map $f : V(G) \to V(H)$ such that the following hold for fixed constants $\Pi \ge 1$, $\Sigma \ge 0$:
\begin{enumerate}
    \item $\Pi^{-1}\dist{}{x, y} - \Sigma \le \dist{}{f(x), f(y)} \le \Pi \dist{}{x, y} + \Sigma$ for every $x, y \in V(G)$, and
    \item for every $z \in V(H)$, there is a $x\in V(G)$ such that $\dist{}{z, f(x)} \le \Sigma$.
\end{enumerate}
We say that classes of weighted graphs $\mathcal{G}$ and $\mathcal{H}$ are \emph{quasi-isometric}, if there exist fixed constants $\Pi \ge 1$, $\Sigma \ge 0$ such that for every $G\in \mathcal{G}$ there exists a $H\in \mathcal{H}$ and a \emph{$(\Pi,\Sigma)$-quasi-isometry} between $G$ and $H$.
It is well known that quasi-isometric classes of (weighted) graphs have equal asymptotic dimension (see for instance \cite{bonamy2023asymptotic}).

\begin{lemma}\label{lem:QI}
    If classes of weighted graphs $\mathcal{G}_1, \mathcal{G}_2$ are quasi-isometric, then they have equal asymptotic dimension.
\end{lemma}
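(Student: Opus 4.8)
\textbf{Proof plan for \cref{lem:QI}.}
The plan is to establish this by a direct unwinding of the definition of a control function, transporting a cover along the quasi-isometry in each direction. Fix constants $\Pi\ge 1$ and $\Sigma\ge 0$ witnessing that $\mathcal{G}_1$ and $\mathcal{G}_2$ are quasi-isometric. It suffices to show $\asdim(\mathcal{G}_1)\le\asdim(\mathcal{G}_2)$, since the roles of the two classes are symmetric: every $(\Pi,\Sigma)$-quasi-isometry $f\colon V(G)\to V(H)$ admits a coarse inverse $g\colon V(H)\to V(G)$ built by choosing, for each $z\in V(H)$, a vertex $g(z)\in V(G)$ with $\dist{}{z,f(g(z))}\le\Sigma$, and one checks that $g$ is itself a $(\Pi',\Sigma')$-quasi-isometry for constants $\Pi',\Sigma'$ depending only on $\Pi,\Sigma$. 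So I would first record this observation, then prove the one-directional inequality.

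Suppose $\asdim(\mathcal{G}_2)=n$, with $n$-dimensional control function $D$. Let $G\in\mathcal{G}_1$ and $r>0$ be given; I need an $(n+1)$-fold $r$-disjoint cover of $V(G)$ by $D'(r)$-bounded sets for a suitable $D'$. Pick $H\in\mathcal{G}_2$ and a $(\Pi,\Sigma)$-quasi-isometry $f\colon V(G)\to V(H)$. Apply the control function to $H$ at scale $r' \coloneqq \Pi(r+2\Sigma)+\Sigma$ (the precise value chosen so that the pullback is $r$-disjoint) to get a cover $\mathcal{U}=\bigcup_{i=1}^{n+1}\mathcal{U}_i$ of $V(H)$ that is $r'$-disjoint in each colour class and $D(r')$-bounded. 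Now pull back: for each $U\in\mathcal{U}$ set $f^{-1}(U)\coloneqq\{x\in V(G)\colon f(x)\in U\}$, and let $\mathcal{V}_i\coloneqq\{f^{-1}(U)\colon U\in\mathcal{U}_i\}$, discarding empty sets. Since $\mathcal{U}$ covers $V(H)\supseteq f(V(G))$, the family $\mathcal{V}=\bigcup_i\mathcal{V}_i$ covers $V(G)$. For boundedness: if $x,y\in f^{-1}(U)$ then $\dist{}{x,y}\le\Pi(\dist{}{f(x),f(y)}+\Sigma)\le\Pi(D(r')+\Sigma)$, so each set is $\Pi(D(r')+\Sigma)$-bounded, giving the control function $D'(r)\coloneqq\Pi(D(\Pi(r+2\Sigma)+\Sigma)+\Sigma)$. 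For $r$-disjointness within $\mathcal{V}_i$: if $x\in f^{-1}(U)$ and $y\in f^{-1}(U')$ with $U\ne U'$ in $\mathcal{U}_i$, then $\dist{}{f(x),f(y)}>r'$, hence $\dist{}{x,y}\ge\Pi^{-1}(\dist{}{f(x),f(y)}-\Sigma)>\Pi^{-1}(r'-\Sigma)=r+2\Sigma>r$. This exhibits an $(n+1)$-fold cover of the required form, so $\asdim(\mathcal{G}_1)\le n=\asdim(\mathcal{G}_2)$.

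I do not expect any genuinely hard step here; the argument is a standard ``push/pull a cover along a coarse equivalence'' computation. The one point that requires a little care — and the closest thing to an obstacle — is the construction of the coarse inverse needed for the symmetric direction, namely verifying that the map $g$ obtained by the (non-canonical) choice $g(z)\in f^{-1}(B_\Sigma(z))$ satisfies both quasi-isometry axioms with constants depending only on $\Pi$ and $\Sigma$; this uses axiom (2) for $f$ to guarantee $g$ is well-defined and the triangle inequality twice to control $\dist{}{g(z),g(z')}$ versus $\dist{}{z,z'}$. Since the statement is explicitly flagged as well known in the excerpt, it would also be legitimate to simply cite \cite{bonamy2023asymptotic} for it, but the self-contained argument above is short enough to include.
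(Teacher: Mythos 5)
Your argument is correct, but note that the paper does not actually prove \cref{lem:QI}: it records it as a well-known fact and cites \cite{bonamy2023asymptotic}, so your self-contained pull-back proof is genuinely more than the paper offers (and, as you say, simply citing would also have matched the paper). The core computation is right: pulling back an $r'$-disjoint, $D(r')$-bounded cover of $H$ along a $(\Pi,\Sigma)$-quasi-isometry $f\colon V(G)\to V(H)$ with $r'=\Pi(r+2\Sigma)+\Sigma$ yields an $(n+1)$-coloured, $r$-disjoint, $\Pi(D(r')+\Sigma)$-bounded cover of $V(G)$, using only the two inequalities of axiom~(1); coverage is trivial and axiom~(2) is never needed for this direction. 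Two small remarks. First, the coarse-inverse construction you flag as the delicate point is actually dispensable: if the class-level relation is read symmetrically (each graph of either class is quasi-isometric to a graph of the other, with uniform constants), you can just run the same pull-back argument in both directions; the coarse inverse is only needed if you insist on deriving the reverse quasi-isometries from the forward ones. Second, as literally defined in the paper the relation ``$\mathcal{G}_1$ and $\mathcal{G}_2$ are quasi-isometric'' only quantifies one way (every $G\in\mathcal{G}_1$ has a partner $H\in\mathcal{G}_2$), so strictly speaking neither your coarse-inverse trick nor anything else yields the inequality $\asdim(\mathcal{G}_2)\le\asdim(\mathcal{G}_1)$, since some $H\in\mathcal{G}_2$ may have no partner at all; this is a looseness in the paper's definition rather than in your proof, and the single inequality your pull-back establishes, namely $\asdim(\mathcal{G}_1)\le\asdim(\mathcal{G}_2)$, is exactly what the paper uses (bounding $\asdim$ of the layers $\mathcal{L}_{d,\alpha}^i$ by that of the convex-set intersection graphs). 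So: correct, essentially the standard argument, with the caveat that the symmetry reduction should be phrased against a symmetric reading of the definition.
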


Instead of working with $\IntGraphs{d}{\alpha}$, we shall define an essentially equivalent (we only add edges of weight 2 between some vertices at distance 2) class of weighted graphs $\WeightedIntGraphs{d}{\alpha}$ that is layerable by a class of weighted graphs that are quasi-isometric to graphs in $\CompIntGraphs{d}{\alpha}$.

For $G\in \IntGraphs{d}{\alpha}$, choose some maximal $v\in V(G)$.
We call $v$ the \emph{pivot} vertex of $G$.
Now, for each non-negative integer $i$, let $D_i$ be the vertices in $G$ at distance $i$ from $v$.
Let $(\hat{G},\weight)$ be the weighted graph obtained from $G$ by adding, for each pair $x,y\in V(G)$ with $x,y\in D_i$ for some $i$, an edge of weight 2 between $x$ and $y$ if $x$ is contained in $y$ (all the edges $(E(G))$ have weight 1 in $(\hat{G},\weight)$).
The following observation is key.

\begin{lemma}\label{lem:dequal}
    Let $G\in \IntGraphs{d}{\alpha}$. Then for every pair $x,y\in V(G)$ holds ${\dist{G}{x,y} = \dist{(\hat{G},\weight)}{x,y}}$.
\end{lemma}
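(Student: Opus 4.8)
The plan is to show that the extra weight-$2$ edges added to form $(\hat G,\weight)$ never create a shortcut, i.e.\ they can always be ``detoured'' through $G$ at no extra cost. Since every edge of $G$ survives in $(\hat G,\weight)$ with weight $1$, we immediately have $\dist{(\hat G,\weight)}{x,y}\le \dist{G}{x,y}$, so only the reverse inequality needs work. For this it suffices to prove the following local claim: if $ab$ is one of the new weight-$2$ edges, with $a,b\in D_i$ and $a$ contained in $b$, then $\dist{G}{a,b}\le 2$; equivalently, $a$ and $b$ have a common neighbour in $G$, or are themselves adjacent in $G$. Granting this, any path in $(\hat G,\weight)$ can be turned into a walk in $G$ of the same length by replacing each new edge of weight $2$ by a path of length at most $2$ in $G$, which proves $\dist{G}{x,y}\le\dist{(\hat G,\weight)}{x,y}$.

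To establish the local claim, fix $a,b\in D_i$ with $\closure a\subseteq \closure b$. Let $P$ be a shortest $v$--$a$ path in $G$, so $P$ has length exactly $i$; let $z$ be the vertex of $P$ at distance $i-1$ from $v$, so $z$ is adjacent to $a$ in $G$, i.e.\ $z\cap a\neq\emptyset$ (writing $x$ also for the path-connected set representing $x$). We want to argue $z$ is also adjacent to $b$, which would give $a$--$z$--$b$ of length $2$ in $G$ (or length $1$ if $z=b$, but $z\in D_{i-1}$ and $b\in D_i$ so $z\neq b$). This is where the hypothesis that $v$ is the \emph{pivot} — a maximal vertex, so $\closure v$ contains every $\closure x$ — should be used: the path $P$ starts inside the big convex set $\closure v$ and ``tunnels out'' to reach $a$, which sits inside $\closure b$. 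Intuitively, to get from $v$ (whose convex closure contains $\closure b$) to $a$ (which lies inside $\closure b$), the path $P$ must at some point have a vertex meeting $b$, and since $P$ is shortest, the \emph{first} vertex of $P$ that lies inside $\closure b$ already forces all subsequent vertices to be ``close'' to $b$. The cleanest version: the vertex $z$ adjacent to $a$ satisfies $z\cap \closure b\neq\emptyset$ because $z\cap a\neq\emptyset$ and $a\subseteq\closure b$; and then $z$ is adjacent to $b$ \emph{provided} $z$ actually meets $b$ itself (as a set), not merely $\closure b$.

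The main obstacle is precisely this last gap: meeting the convex closure $\closure b$ is weaker than meeting the set $b$, since $b$ may have interior removed. I expect this to be handled by a minimality/shortest-path argument on $P$: consider the last vertex $w$ of $P$ (before $a$) that does \emph{not} lie entirely inside $\closure b$ — such a $w$ exists because $v$'s set is not contained in $\closure b$ (as $\closure v\supseteq\closure b$ and they are distinct, or by a direct argument that $v\notin\closure b$), and $a$ does lie inside $\closure b$. The successor $w'$ of $w$ on $P$ lies inside $\closure b$ but is adjacent to $w$, which sticks out of $\closure b$; hence the edge $ww'$, viewed in $\R^d$, crosses the boundary of the convex set $\closure b$, i.e.\ it meets the topological sphere/surface that is the boundary of $\closure b$. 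Since $b$ is obtained from $\closure b$ by removing a subset of the \emph{interior}, the boundary of $\closure b$ is contained in $b$, so $w'$ (or $w$) meets $b$. Then the sub-path of $P$ from that vertex to $a$, together with $b$, is a walk in $G$; and one checks this sub-path has length at most $1$ — indeed $w'\in D_j$ for $j\ge i-1$ forces $w'=z$ — giving the desired common neighbour $z$ of $a$ and $b$. Writing this up carefully, keeping track of which vertex meets $b$ versus $\closure b$ and bounding the residual path length by the shortest-path property of $P$, is the crux; the rest is the routine edge-replacement argument sketched above.
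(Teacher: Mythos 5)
Your proposal follows essentially the same route as the paper: reduce to showing that each added weight\nobreakdash-2 edge $ab$ (with $a,b\in D_i$ and $a$ contained in $b$) satisfies $\dist{G}{a,b}\le 2$, take a $v$--$a$ geodesic $P$, use the fact that the boundary of $\closure{b}$ is never removed to force some vertex of $P$ to meet the set $b$ itself, and then use the layer structure ($b\in D_i$, so any vertex meeting $b$ lies in $D_{i-1}\cup D_i\cup D_{i+1}$, while $P$ only visits $D_0,\dots,D_i$) to conclude that this vertex is $z\in D_{i-1}$ or $a$. The only difference is cosmetic: the paper runs a single curve through $\bigcup_{u\in V(P)}u$ from $a$ to a point of $v$ outside $\closure{b}$, whereas you localise at the last vertex $w$ of $P$ protruding from $\closure{b}$ and its successor $w'$; your case bookkeeping about whether $w$ or $w'$ meets $b$ is slightly loose but resolves correctly (if $w$ meets $b$ then $w=z$; if $w'$ meets $b$ then $w'\in\{z,a\}$), exactly as in the paper.

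There is, however, one genuine flaw to fix: you twice assert that the pivot $v$ being \emph{maximal} means $\closure{v}$ contains every $\closure{x}$. That is not the definition --- maximal only means no other vertex contains $v$ --- and in general $\closure{v}\supseteq\closure{b}$ is false, so your primary justification that some vertex of $P$ protrudes from $\closure{b}$ does not stand as written, and your fallback ``direct argument that $v\not\subseteq\closure{b}$'' is left unproved. Fortunately the needed fact does follow from maximality in one line: since $b$ is obtained from a compact convex set by deleting only interior points, $\closure{b}$ is convex and $\closure{v}$ is the convex hull of $v$; hence $v\subseteq\closure{b}$ would give $\closure{v}\subseteq\closure{b}$, i.e.\ $b$ (which is distinct from $v$ because $b\in D_i$ with $i\ge 1$) would contain $v$, contradicting maximality. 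Inserting this observation (which the paper also uses implicitly when it asserts its curve ends at a point of $v$ outside $\closure{y}$) closes the gap, and the rest of your argument is sound.
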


\begin{proof}
    It is enough to show that if there is an edge of weight 2 between vertices $x,y$ in $(\hat{G},\weight)$, then in $G$, there is a vertex adjacent to both $x$ and $y$.
    Since there is an edge of weight 2 between vertices $x,y$ in $(\hat{G},\weight)$, there exists some positive integer $i$ such that $x,y\in D_i$, and without loss of generality, we may assume that $x$ is contained in $y$.
    Let $P$ be a path of length $i$ between $x$ and $v$, and let $z$ be the unique vertex of $D_{i-1}\cap V(P)$.
    Then $z$ intersects $x$ and $\bigcup_{u\in V(P)} u$ contains a curve $C$ from $x$ to a point in $v$ not contained in $w$ for any $w\in V(G)$.
    Since the curve $C$ starts in the interior of $y$ and ends in its exterior, $C$ must intersect $y$. Therefore, $y$ is adjacent to a vertex of $P$. However, $y$ is non-adjacent to $x$ in $G$ and cannot be adjacent to any vertex of $D_0 \cup \cdots \cup D_{i-2}$ since $y\in D_i$.
    Therefore, $y$ must be adjacent to $z$ in $G$.
    Hence, the distance between $x$ and $y$ in $G$ is equal to 2, as desired.
\end{proof}

Let $\mathcal{L}_{d,\alpha}^i$ be the class of weighted graphs that are $(i,1)$-quasi-isometric to a graph in $\CompIntGraphs{d}{\alpha}$. 
Note that $\mathcal{L}_{d,\alpha}^1 \subseteq  \mathcal{L}_{d, \alpha}^2 \subseteq \cdots$.
Let $\mathcal{L}_{d, \alpha} = (\mathcal{L}_{d,\alpha}^1 , \mathcal{L}_{d, \alpha}^2 , \ldots )$.
We show that $\WeightedIntGraphs{d}{\alpha}$ is $\mathcal{L}_{d, \alpha}$-layerable.

\begin{lemma}\label{lem:layerable}
    The class of weighted graphs $\WeightedIntGraphs{d}{\alpha}$ is $\mathcal{L}_{d, \alpha}$-layerable.
\end{lemma}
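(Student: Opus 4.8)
The plan is to use the pivot vertex $v$ of $G\in\IntGraphs{d}{\alpha}$ and the distance layers $D_i$ to build a real projection of $(\hat G,\weight)$, and then show that each maximal $(p,S)$-bounded set induces a weighted graph that is quasi-isometric to a bounded-aspect-ratio convex intersection graph. First I would set $p(x)=\dist{G}{v,x}=\dist{(\hat G,\weight)}{v,x}$ for every $x\in V(G)$; here the second equality is exactly \cref{lem:dequal}, and it also guarantees $|p(x)-p(y)|\le\dist{(\hat G,\weight)}{x,y}$, so $p$ is indeed a real projection. Fix $S>0$ and a maximal $(p,S)$-bounded set $A$; then all vertices of $A$ lie in at most $\lceil S\rceil+1$ consecutive layers $D_i,D_{i+1},\dots,D_{i+\lceil S\rceil}$, and by maximality $A$ consists of \emph{all} vertices whose $p$-value lies in some window of length $S$.

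The key step is to produce, from the induced weighted graph $(\hat G,\weight)[A]$, an intersection graph of compact convex sets of bounded aspect ratio together with a quasi-isometry. The natural idea: replace each vertex $x\in A$ by its convex closure $\closure{x}$. This gives a member of $\CompIntGraphs{d}{\alpha}$ \emph{provided} no two vertices of $A$ are in a containment relation — but that is false in general, which is precisely why the weight-$2$ edges were added. So instead I would, within each layer $D_j\cap A$, pick the containment-maximal representatives: for each maximal element $x$ of the containment poset restricted to $D_j\cap A$, keep $\closure{x}$, and observe that every other $y\in D_j\cap A$ with $y$ contained in some such $x$ is at weighted distance $2$ from $x$ in $(\hat G,\weight)[A]$ (by the weight-$2$ edge) and $\closure{y}\subseteq\closure{x}$. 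Mapping each such $y$ to the corresponding maximal $x$ (and each maximal $x$ to itself) then defines $f:A\to V(H)$, where $H=G(\{\closure{x}:x\text{ maximal in its layer within }A\})\in\CompIntGraphs{d}{\alpha}$. One must check: (i) two maximal representatives in the \emph{same} layer are non-adjacent in $G$ (since containment-incomparable in a layer does not force adjacency — actually one should be slightly careful here and may need to argue that incomparable same-layer sets representing the convex closures still intersect iff adjacent, or simply note that $H$ is allowed to have any edges and it is the quasi-isometry constants that absorb the discrepancy); (ii) the map $f$ distorts distances by at most a multiplicative constant and an additive constant, both depending only on $S$ — roughly, collapsing a contained vertex to its container changes distances by at most the weighted distance $2$ between them, and within $\le\lceil S\rceil+1$ layers the number of nested contractions along any shortest path is $\bigO{S}$, giving $(\Pi,\Sigma)$ with $\Pi,\Sigma\in\bigO{S}$; (iii) $f$ is coarsely surjective, with $\Sigma=1$ sufficing since each vertex of $H$ is in the image. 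Setting $f(S)$ to be (a suitable integer bound on) this $\Pi$ then shows $(\hat G,\weight)[A]\in\mathcal{L}_{d,\alpha}^{f(S)}$, as required.

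The main obstacle I anticipate is step (ii): controlling the distance distortion of the collapsing map, because a single shortest path in $(\hat G,\weight)[A]$ may alternate many times between ``real'' edges and weight-$2$ containment edges, and each weight-$2$ edge used is exactly where we contract. The point to get right is that every weight-$2$ edge has both endpoints in the same layer, there are at most $\lceil S\rceil+1$ layers, and a shortest path visits each layer in a controlled way, so the total contribution of contracted edges to the length of the path is $\bigO{S}$ — hence $\dist{H}{f(x),f(y)}$ and $\dist{(\hat G,\weight)[A]}{x,y}$ differ by at most an additive $\bigO{S}$, which is even better than needed. The other slightly delicate point is making sure $\closure{x}$ for the maximal representatives are genuinely compact convex of aspect ratio $\le\alpha$ — this is immediate from the definition of $\IntGraphs{d}{\alpha}$, since each vertex set is obtained from such a convex set by removing part of its interior, and the convex closure recovers that convex set (up to the boundary, which does not affect compactness or aspect ratio).
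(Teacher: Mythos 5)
There is a genuine gap, and it sits exactly where the real content of the lemma lies. Your target graph $H$ is the intersection graph of the convex closures of the per-layer-maximal vertices, so $H$ may contain edges that are absent from $G[A]$: two representatives $m_1\in D_j\cap A$ and $m_2\in D_{j'}\cap A$ with $j\neq j'$ can satisfy $\closure{m_2}\subseteq\closure{m_1}$ while $m_1\cap m_2=\emptyset$. For $f$ to be a quasi-isometry you need the lower bound $\Pi^{-1}\dist{(\hat{G},\weight)[A]}{x,y}-\Sigma\le\dist{H}{f(x),f(y)}$, which amounts to showing that every edge of $H$ joins representatives lying at distance $\bigO{S}$ in $(\hat{G},\weight)[A]$; for these cross-layer containment edges this is not automatic, and your proposal only asserts in (i) that ``the quasi-isometry constants absorb the discrepancy''. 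Your distortion discussion in (ii) establishes only the easy direction: pushing a shortest path of $(\hat{G},\weight)[A]$ edge by edge into $H$ gives $\dist{H}{f(x),f(y)}\le\dist{(\hat{G},\weight)[A]}{x,y}$, but it says nothing about lifting an $H$-path (which may use closure-containment edges) back into $(\hat{G},\weight)[A]$, and that is the direction that can fail a priori.

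The missing argument is precisely the geometric step the paper makes. If $\closure{m_2}\subseteq\closure{m_1}$ with both vertices in $A$, take a $v$--$m_2$ geodesic $x_0x_1\ldots x_\ell$; the union of its vertices contains a curve from $m_2$ (inside $\closure{m_1}$) to a point outside $\closure{m_1}$, so $m_1$ is adjacent in $G$ to some $x_i$, and since $m_1\in D_{p(m_1)}$ necessarily $i\in\{p(m_1)-1,p(m_1),p(m_1)+1\}$; this yields a path inside $A$ of length $\bigO{S}$ between $m_1$ and $m_2$ --- except in the boundary case $p(m_1)=t$ where the only adjacency may be to $x_{t-1}\notin A$, and there one must argue that $x_{t}$ is contained in $\closure{m_1}$ and use the weight-$2$ edge between them (this is the entire reason the weight-$2$ edges were introduced). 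Without this step your map $f$ has no proved lower distortion bound and the lemma does not follow. For comparison, the paper avoids spurious edges altogether by letting $M$ be the containment-maximal vertices of all of $G[A]$, so that $G[M]\in\CompIntGraphs{d}{\alpha}$ with exactly the edges of $G$, and then uses the same geodesic-curve argument to show each fibre $f^{-1}(m)$ has radius at most $\lceil S\rceil+2$. Your per-layer choice of representatives is fine in itself (it even makes the fibre bound trivial), but it merely relocates this unavoidable geometric argument into controlling the containment edges of $H$, and that argument is what your write-up omits.
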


\begin{proof}
    Let $(\hat{G},\weight)\in \WeightedIntGraphs{d}{\alpha}$, let $G \in \IntGraphs{d}{\alpha}$ be the graph obtained from $(\hat{G},\weight)$ by removing its edges of weight 2, and let $v$ be the pivot-vertex of $(\hat{G},\weight)$.
    For $x\in V(\hat{G})$, let $p(x)\coloneqq d_{\hat{G}}(v,x)$.
    Then, $p$ is a real projection of $(\hat{G},\weight)$.
    Let $S>0$, and let $A\subseteq V(\hat{G})$ be some maximal $(p,S)$-bounded set in $(\hat{G},\weight)$.
    Then, there exists some non-negative integer $t$ such that $A$ is the set of vertices in $(\hat{G},\weight)$ (or equivalently $G$) whose distance from $v$ is between $t$ and $t + \lceil S \rceil$; indeed, we can let $t\coloneqq \min \{p(x) \mid x\in A\}$.
    
    Let $M$ be the set of maximal vertices of $G[A]$.
    Observe that $(\hat{G},\weight)[M]=G[M]$ contains no edge of weight 2 and furthermore that it is a graph in $\CompIntGraphs{d}{\alpha}$. 
    Let $f: A \to M$ be such that for every $u\in A$, $f(u)$ is some maximal vertex containing $u$.
    We shall show that $f$ is a $(2\lceil S \rceil + 4, 1)$-quasi-isometry from $(\hat{G},\weight)[A]$ to $G[M]$, which implies that $(\hat{G},\weight)[A]\in \mathcal{L}_{d,\alpha}^{2\lceil S \rceil+4}$ and therefore that $\WeightedIntGraphs{d}{\alpha}$ is $\mathcal{L}_{d, \alpha}$-layerable as we desire.
    
    If there is an edge of weight 1 or 2 between vertices $x$ and $y$ of $(\hat{G},\weight)[A]$, then $f(x)$ and $f(y)$ intersect or coincide, and therefore $\dist{G[M]}{f(x),f(y)}\le 1$.
    It follows that $\dist{G[M]}{f(x),f(y)} \le \dist{(\hat{G},\weight)[A]}{x,y}$ for every pair of vertices $x,y\in A$.
    
    Consider some $m\in M$. Note that $f^{-1}(m)$ is non-empty since $f(m)=m$, and so $f$ is surjective. We aim to show that each vertex $x$ of $f^{-1}(m)$ has distance at most $S+2$ from $m$ in $(\hat{G},\weight)[A]$. 
    Let $x_0x_1\ldots x_\ell$ be a $v$--$x$~geodesic in $(\hat{G},\weight)$ (or $G$), i.e.\ a  path  with $x_0=v$, $x_\ell=x$ and $p(x_i) = \dist{(\hat{G},\weight)}{v,x_i}=i$ for each $i$.
    Then $\bigcup_{i=0}^\ell x_i$ contains a curve $C$ from $x$ to a point in $v$ not contained in the interior of $z$ for any $z\in V(G)\backslash v$.
    Since the curve $C$ starts in the interior of $\closure{m}$ and ends outside the interior of $\closure{m}$, it follows that $m$ is adjacent in $G$ to some vertex of $\{x_0, \ldots, x_\ell$\}.
    We further have that $m$ must be adjacent to one of $x_{p(m)-1}, x_{p(m)} , x_{p(m)+1}$.
    Note that $t\le p(m) \le p(x)=\ell \le t + \lceil S \rceil$ since $m,x\in A$.
    If either $p(m)>t$ or $m$ is adjacent to one of $x_{p(m)}$ or $x_{p(m)+1}$, then there is a path in $G[A]$ (and thus in $(\hat{G},\weight)[A]$) between $m$ and $x$ of length at most $\lceil S \rceil +1$.
    So, we may assume now that $p(m)=t$ and that $m$ is non-adjacent to both $x_{p(m)}$ and $x_{p(m)+1}$.
    It follows that $x_{p(m)}$ is contained in $\closure{m}$ and that $m$ is adjacent to $x_{p(m)-1}$ in $G$.
    Therefore, there is an edge of weight 2 in $(\hat{G},\weight)$ between $m$ and $x_{p(m)}$.
    Hence, there is a path of length at most $\lceil S \rceil +2$ in $(\hat{G},\weight)[A]$ between $m$ and $x$.
    Thus, $x$ has distance at most $\lceil S \rceil +2$ from $m$ in~$(\hat{G},\weight)[A]$.
    
    So $f^{-1}(m)$ has radius at most $\lceil S \rceil+2$ in $(\hat{G},\weight)[A]$ and therefore diameter at most $2\lceil S \rceil+4$.
    Thus, as $G[M]$ is a graph, for every $x,y\in A$, we have that $(2\lceil S \rceil+4)^{-1}\dist{(\hat{G},\weight)[A]}{x,y} - 1 \le \dist{G[M]}{f(x), f(y)}$.
    Hence $f$ is a $(2\lceil S \rceil+4)$-quasi-isometry between $(\hat{G},\weight)[A]$ and $G[M]$.
\end{proof}

We have assembled all the ingredients to prove the main result of this section:
\begin{theorem} \label{thm asdim}
    For every positive integer $d$ and every $\alpha \ge 1$, the class $\IntGraphs{d}{\alpha}$ of intersection graphs of systems of path-connected sets in $\R^d$, each of which is obtained from a compact convex set of aspect ratio at most $\alpha$ by removing some subset of its interior, has asymptotic dimension at most $2d + 2$.
\end{theorem}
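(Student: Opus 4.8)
The plan is to assemble the pieces already in place: the Dvo{\v{r}}{\'a}k--Norin bound \cref{thm:convexasdim} for systems of convex sets, the layering theorem \cref{thm:layer} of Bonamy et al., the quasi-isometry invariance \cref{lem:QI}, and the two structural facts \cref{lem:dequal} and \cref{lem:layerable} about the auxiliary weighted class $\WeightedIntGraphs{d}{\alpha}$, whose members are the weighted graphs $(\hat{G},\weight)$ built from the graphs $G\in\IntGraphs{d}{\alpha}$ as above.

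First I would pin down the asymptotic dimension of the classes appearing in the layering sequence. Every weighted graph in $\mathcal{L}_{d,\alpha}^i$ is $(i,1)$-quasi-isometric to a graph in $\CompIntGraphs{d}{\alpha}$, while every graph in $\CompIntGraphs{d}{\alpha}$ is $(i,1)$-quasi-isometric to itself and hence lies in $\mathcal{L}_{d,\alpha}^i$; so $\mathcal{L}_{d,\alpha}^i$ and $\CompIntGraphs{d}{\alpha}$ are quasi-isometric classes, with constants $\Pi=i$, $\Sigma=1$ independent of the individual graph. Since $\asdim(\CompIntGraphs{d}{\alpha})\le 2d+1$ by \cref{thm:convexasdim}, \cref{lem:QI} yields $\asdim(\mathcal{L}_{d,\alpha}^i)\le 2d+1$ for every $i$.

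Next, \cref{lem:layerable} states that $\WeightedIntGraphs{d}{\alpha}$ is $\mathcal{L}_{d,\alpha}$-layerable, where $\mathcal{L}_{d,\alpha}=(\mathcal{L}_{d,\alpha}^1,\mathcal{L}_{d,\alpha}^2,\dots)$ is a nested sequence of classes each of asymptotic dimension at most $2d+1$ by the previous step; applying \cref{thm:layer} with $n=2d+1$ gives $\asdim(\WeightedIntGraphs{d}{\alpha})\le 2d+2$. It then remains to transfer this bound back to $\IntGraphs{d}{\alpha}$. For $G\in\IntGraphs{d}{\alpha}$, the weighted graph $(\hat{G},\weight)\in\WeightedIntGraphs{d}{\alpha}$ has the same vertex set as $G$, and by \cref{lem:dequal} distances agree exactly, so the identity on $V(G)$ is a $(1,0)$-quasi-isometry from $G$ --- viewed as a weighted graph with all weights $1$ --- onto $(\hat{G},\weight)$. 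Hence $\IntGraphs{d}{\alpha}$ and $\WeightedIntGraphs{d}{\alpha}$ are quasi-isometric classes with absolute constants, and \cref{lem:QI} gives $\asdim(\IntGraphs{d}{\alpha})=\asdim(\WeightedIntGraphs{d}{\alpha})\le 2d+2$, as claimed.

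Since every nontrivial ingredient has already been isolated into a lemma, I do not expect a genuine obstacle in the proof of the theorem itself; the real work lay in constructing $\WeightedIntGraphs{d}{\alpha}$ so that it is simultaneously quasi-isometric to $\IntGraphs{d}{\alpha}$ (via \cref{lem:dequal}) and layerable over classes close to $\CompIntGraphs{d}{\alpha}$ (via \cref{lem:layerable}). The one point that merits care is the uniformity of the quasi-isometry constants required by \cref{lem:QI}: in the first step they depend only on the layer index $i$ and not on the graph, while in the last step they are the absolute constants $(1,0)$ --- exactly what the notion of quasi-isometric classes demands.
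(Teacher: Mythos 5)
Your proposal is correct and follows essentially the same route as the paper: Lemma~\ref{lem:dequal} to identify the asymptotic dimension of $\IntGraphs{d}{\alpha}$ with that of $\WeightedIntGraphs{d}{\alpha}$, Lemma~\ref{lem:QI} together with Theorem~\ref{thm:convexasdim} to bound the layer classes $\mathcal{L}_{d,\alpha}^i$ by $2d+1$, and Lemma~\ref{lem:layerable} with Theorem~\ref{thm:layer} to conclude the bound $2d+2$. Your explicit attention to the uniformity of the quasi-isometry constants (depending only on the layer index $i$, and $(1,0)$ in the final transfer) is exactly the point the paper leaves implicit.
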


\begin{proof}
    By \cref{lem:dequal}, the asymptotic dimension of $\IntGraphs{d}{\alpha}$ and $\WeightedIntGraphs{d}{\alpha}$ is equal.
    By \cref{lem:layerable}, $\WeightedIntGraphs{d}{\alpha}$ is $\mathcal{L}_{d,\alpha}$-layerable.
    The graph classes $\mathcal{L}_{d,\alpha}^1 , \mathcal{L}_{d, \alpha}^2 , \ldots $ all have asymptotic dimension at most $2d+1$ by \cref{lem:QI} and \cref{thm:convexasdim}.
    Therefore, by \cref{thm:layer}, $\WeightedIntGraphs{d}{\alpha}$ has asymptotic dimension at most $2d+2$.
    Hence $\IntGraphs{d}{\alpha}$ has asymptotic dimension at most $2d+2$.
\end{proof}

\section{Final remarks and open problems} \label{sec probs}

As we have seen, every graph $G$ with at least three vertices has sphere dimension at most $|V(G)|-1$ by \cref{obs every}. Can this be improved?
Providing a bound with respect to the number of edges is also interesting.

\begin{problem}
What is the maximum sphere dimension of a graph on $n$ vertices?
\end{problem}

In light of other known separator theorems for ball intersection graphs~\cite{MiTeThVaGeo}, it seems likely that the exponent in \cref{thm seps} is not optimal.
We conjecture the following.

\begin{conjecture} For any integers $t,d \geq 1$, every $n$-vertex graph $G\in\SphereInt{d}_t$ has a balanced separator of size ${\bigO{}}_{t,d}(n^{1-\frac{1}{d+1}})$.
\end{conjecture}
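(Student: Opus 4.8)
\emph{Overview of the strategy.} The plan is to bypass the shallow‑minor route of \cref{thm seps} entirely — the polynomial blow‑up of the forbidden clique size in \cref{lem:forbidShallowMinor}, together with the quadratic dependence on that size in \cref{thm:PRS}, is precisely what stops that route from reaching the exponent $1-\tfrac1{d+1}$ — and instead to argue geometrically, splitting into two regimes according to how much nesting occurs in a sphere representation of $G\in\SphereInt{d}_t$. Fix a representation $G=G(\mathcal S)$ with $\mathcal S$ a finite family of spheres in $\R^d$, and set the threshold $k\coloneqq n^{1/(d+1)}$.

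\emph{The low‑nesting case.} Suppose first that every nested subfamily of $\mathcal S$ has size less than $k$. By the argument inside the proof of \cref{lem:bounded_nestedness_bounded_scol}, the family of balls $\{\ball{S}:S\in\mathcal S\}$ then has ply at most $2kt$. Since $S_u\cap S_v\ne\emptyset$ implies $\ball{S_u}\cap\ball{S_v}\ne\emptyset$, the intersection graph $G'$ of these balls is a supergraph of $G$ on the same vertex set, so every balanced separator of $G'$ is one of $G$; moreover each component of $G-X$ lies in a component of $G'-X$, so balancedness transfers. By the separator theorem of Miller, Teng, Thurston, and Vavasis~\cite{MiTeThVaGeo} for ball systems of bounded ply, $G'$ — hence $G$ — has a balanced separator of size $\mathcal O_d\bigl((kt)^{1/d}\,n^{1-1/d}\bigr)$. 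A direct computation gives $\tfrac1{d(d+1)}+1-\tfrac1d = 1-\tfrac1{d+1}$, so this is $\mathcal O_{d}\bigl(t^{1/d}\,n^{1-1/(d+1)}\bigr)$, matching (indeed beating in the $t$‑dependence) the conjectured bound in this regime.

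\emph{The high‑nesting case.} It remains to treat a representation in which $\mathcal S$ contains a nested subfamily $T_1\succ T_2\succ\cdots\succ T_m$ with $m\ge k$. Here one wants to use the long chain itself to build the separator: the balls $\ball{T_1}\supseteq\cdots\supseteq\ball{T_m}$ cut out nested shells, the vertex counts $a_i\coloneqq|\{v:\ball{S_v}\subseteq\ball{T_i}\}|$ are monotone with $n\ge a_1\ge\cdots\ge a_m\ge1$, and one would like to pick a level $i$ at which $a_i$ is balanced and at which the spheres crossing $T_i$ — together with a bounded number of the $T_j$'s — form a small cut, then recurse on the inside part, the outside part, and (separately) on the spheres that straddle level $i$. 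The role of $K_{t,t}$‑freeness would be exactly as in \cref{lem:few_short_paths_through_nested_spheres}: a long nested chain admits only few short vertex‑disjoint paths from its inside to its outside. One would aim for a recursion whose depth is controlled by the nesting and which costs only one extra dimension, accounting for the gap between the ball exponent $1-\tfrac1d$ obtained in the low‑nesting case and the conjectured $1-\tfrac1{d+1}$.

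\emph{The main obstacle.} The difficulty — and the reason this remains a conjecture — is the last point. A balanced vertex separator must hit \emph{all} paths crossing the chain, whereas \cref{lem:few_short_paths_through_nested_spheres} controls only paths of bounded length; and arbitrarily many pairwise‑disjoint spheres may straddle a single sphere $T_i$, so the neighbourhood of $T_i$ need not be a small cut. Turning the high‑nesting heuristic into a proof thus seems to require a genuinely new idea, either on the geometric side (a correct recursion across nesting levels, paying only one extra dimension) or on the combinatorial side (a simultaneous improvement of both the degree in $r$ of the forbidden clique in \cref{lem:forbidShallowMinor} and the quadratic clique‑dependence in \cref{thm:PRS}).
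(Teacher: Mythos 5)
This statement is one of the paper's open conjectures, not a theorem: the authors only prove the weaker exponent $1-\tfrac{1}{2d+8}$ (\cref{thm seps}) and explicitly leave the bound $n^{1-\frac{1}{d+1}}$ unproven, so there is no proof in the paper to compare against. Your proposal is not a proof either, and you say as much yourself. The low-nesting half is genuinely correct and worth recording: if every nested subfamily of the chosen representation $\mathcal S$ has size less than $k=n^{1/(d+1)}$, then the chain/antichain argument from \cref{lem:bounded_nestedness_bounded_scol} bounds the ply of $\{\ball{S}\}$ by $2kt$, the ball intersection graph is a supergraph of $G$ on the same vertex set (so its balanced separators are balanced separators of $G$), and the Miller--Teng--Thurston--Vavasis bound $\mathcal O_d(\mathrm{ply}^{1/d}n^{1-1/d})$ gives exponent $\tfrac{1}{d(d+1)}+1-\tfrac1d=1-\tfrac1{d+1}$, as you compute. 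That part I would accept.

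The gap is the entire high-nesting case, which is a heuristic rather than an argument, for exactly the reasons you name plus one you do not. First, \cref{lem:few_short_paths_through_nested_spheres} only bounds the number of disjoint \emph{short} paths crossing a nested chain, whereas a balanced separator must meet \emph{every} inside-to-outside path; nothing in the paper or in your sketch converts the short-path bound into a small cut. Second, the set of spheres straddling a fixed level $T_i$ can be arbitrarily large and induce an arbitrary $K_{t,t}$-free sphere graph, so "recurse on the straddlers" has no size or structure control, and the proposed recursion has no specified termination or cost analysis; note also that your threshold $k=n^{1/(d+1)}$ depends on $n$, so the dichotomy must be re-established at every level of the recursion with a changing threshold. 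Third, you fix an arbitrary representation $G=G(\mathcal S)$ and split on its nesting; but membership in $\SphereInt{d}_t$ only guarantees \emph{some} representation, and you neither show that a low-nesting representation can always be chosen nor handle the case where every representation has a long chain. (\cref{lem:minimal_shallow_model_bounded_nestedness} bounds nesting only for vertex-minimal graphs containing a shallow clique minor, which is a different and much weaker statement.) Until the high-nesting regime is handled, the proposal establishes the conjectured bound only for graphs admitting a representation of nesting depth $O(n^{1/(d+1)})$, which is a special case, not the conjecture.
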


\noindent This conjecture is true for $d=1$ since every $n$-vertex circle graph with a forbidden $K_{t,t}$ subgraph has a separator of size $\bigO{}_t(\sqrt{n})$ due to~\cite{LeeSeparators} and \cite[Theorem~3]{foxPach2010}.

It is also likely possible to improve the asymptotic dimension bound in \cref{thm asdim} for graphs of bounded sphere dimension as follows.

\begin{conjecture}\label{conj:asdim}
    For any integer $d\geq 1$, the class of graphs with sphere dimension at most $d$ has asymptotic dimension at most $d$.
\end{conjecture}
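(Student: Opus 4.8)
The conjectured bound $d$ lies far below the $2d+2$ of \cref{thm asdim}, and the gap comes from the layering theorem \cref{thm:layer}: although robust, it always costs a $+1$, and the ``non-nested'' pieces it produces --- intersection graphs of convex sets in $\R^d$, to which \cref{thm:convexasdim} is applied --- can already realise a $d$-dimensional grid (take congruent balls centred at the points of a grid), and hence have asymptotic dimension $d$. So even an optimal replacement of \cref{thm:convexasdim}, fed into \cref{thm:layer}, would give only $d+1$, and one must instead control $\asdim$ directly through the containment structure of a sphere representation. The plan is to use a Hurewicz-type (Bell--Dranishnikov) theorem for asymptotic dimension: if there is a coarse surjection from $G$ onto a graph $T$ with $\asdim(T)\le a$ such that the preimage of every bounded subset of $T$ has asymptotic dimension at most $b$, uniformly, then $\asdim(G)\le a+b$. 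One takes $T$ to be a tree ($a=1$) built from the Hasse forest of the containment poset of the spheres of $G$, with the coarse map recording how deeply a vertex sits in that poset; the goal is that the preimages of bounded sets --- suitably defined ``pieces'' of $G$ --- have asymptotic dimension at most $d-1$, whence $\asdim(G)\le 1+(d-1)=d$.

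The steps, in order, would be: (1) fix a representation $\mathcal{S}$ with $G=G(\mathcal{S})$, form the containment poset on $\mathcal{S}$ (which is laminar-like, in that the strict ancestors of any sphere form a chain), and extract a bounded-degree tree $T$ from its Hasse forest together with a suitable ``depth'' map $V(G)\to V(T)$; (2) verify the metric tameness hypotheses of the Hurewicz theorem for this map, which is where a coarse analogue of \cref{lem:few_short_paths_through_nested_spheres} --- bounding how a short path can thread a long nested chain --- should enter; (3) prove the building-block estimate, that each piece (a bounded band of consecutive containment-levels, further restricted in some ``transversal'' sense) has asymptotic dimension at most $d-1$; and (4) assemble via the Hurewicz bound. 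Since~(3) is morally the conjecture in dimension $d-1$, the argument would run by induction on $d$, with base case $d=1$: a set of chords no two of which are nested is a clique, of asymptotic dimension $0$.

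The genuine obstacle is~(3), together with choosing the decomposition in~(1)--(2) so that~(3) can hold at all. Taking pieces to be raw bands of consecutive levels does not work: a single level of pairwise non-nested spheres in $\R^d$ can itself realise a $d$-dimensional grid, so its asymptotic dimension is $d$, not $d-1$. Thus nesting cannot be peeled off as one clean coordinate; the pieces must simultaneously restrict a second, ``transversal'' coordinate so that nesting and transversal direction together account for only $d$ dimensions rather than $d+1$, and I do not see how to define such a coordinate in general --- a new structural idea seems to be required. A natural first target is the case $d=1$ --- that circle graphs have asymptotic dimension at most $1$, which is itself open --- where one would hope for a sufficiently fine two-colouring of a chord diagram, one colour class governed by nesting depth and the other by position around the circle, in the spirit of the colouring used by Bonamy et al.~\cite{bonamy2023asymptotic} for planar graphs; settling this case would be the decisive test of the whole strategy.
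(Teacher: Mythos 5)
This statement is a \emph{conjecture} in the paper, not a theorem: the authors prove only the bound $2d+2$ (\cref{thm asdim}), record that the $d=1$ case follows from known results \cite{chepoi2012constant,FujPapAsy}, that $d=2$ is deferred to separate work \cite{stringQI}, and that Liu and Norin \cite{liu} have since obtained $d+1$ by different methods. So there is no proof in the paper to compare yours against, and — as you yourself say plainly in your last paragraph — what you have written is a strategy with an unresolved central step, not a proof. The gap is exactly where you locate it: a Hurewicz-type argument over a tree extracted from the containment poset gives $\asdim \le 1 + b$, where $b$ bounds the asymptotic dimension of the preimages of bounded sets, and since a single containment-level of pairwise non-nested congruent balls already realises a $d$-dimensional grid, the best such an argument can yield without a genuinely new ``transversal coordinate'' is $1+d = d+1$. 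That is precisely the Liu--Norin bound, so your analysis of the barrier is sound, but the conjecture itself remains untouched by it.

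Two factual corrections to your proposal. First, the case $d=1$ is not open: the paper states that circle graphs having asymptotic dimension at most $1$ is easy to deduce from known results, so it cannot serve as the ``decisive test'' you propose. Second, your intended base case is wrong as stated: a set of chords of $\BS^1$ no two of which are nested need not be a clique. Two non-crossing chords with non-interleaved endpoints (equivalently, two disjoint intervals after cutting the circle) are neither nested nor adjacent in the circle graph, so a containment-antichain of chords can be a large independent set — indeed this is a one-dimensional instance of the same grid obstruction you identify in higher dimensions. Any successful decomposition must therefore mix nesting depth with positional information already at $d=1$, which reinforces rather than resolves the difficulty you flag in step~(3).
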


This conjecture would be tight since the hypercubic lattice $\Z^d$ has asymptotic dimension and sphere dimension equal to $d$.
For the latter claim, the sphere dimension is at most $d$ because of the standard sphere packing centred at the integers. The fact that the sphere dimension is at least $d$ follows from a deep result by Benjamini and Schramm~\cite{BeSchrHar, BeSchrLac} stating that $\Z^{d} $ cannot be quasi-sphere packed in $\R^{d-1}$, with a little additional work.
The case $d=2$ of the conjecture (furthermore for Assouad--Nagata dimension and in the more general class of string graphs) will be proved in separate work \cite{stringQI}.
It is also easy to deduce the $d=1$ case for circle graphs from known results \cite{chepoi2012constant,FujPapAsy}.
Recently Liu and Norin \cite{liu} independently, using different methods, proved an improved asymptotic dimension bound of $d+1$.
It might be possible to reduce \cref{conj:asdim} via quasi-isometry to the class of intersection graphs of balls in $\mathbb{R}^d$ (this is the case for $d=2$~\cite{stringQI}).

\begin{conjecture}\label{conj:qi}
    For any integer $d\geq 2$, the class of graphs with sphere dimension at most $d$ is quasi-isometric to the class of intersection graphs of balls in $\mathbb{R}^d$.
\end{conjecture}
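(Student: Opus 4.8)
The plan for proving \cref{conj:qi} would be as follows. First, it suffices to produce, for every connected graph $G\in\SphereInt{d}$ together with a sphere representation $\mathcal S$ with $G=G(\mathcal S)$, a finite family $\mathcal B$ of balls in $\R^d$ and a quasi-isometry from $G$ to $G(\mathcal B)$ with constants depending only on $d$; the disconnected case then follows by placing the components' ball systems in pairwise far-apart regions of $\R^d$. Throughout it is cleaner to work with the auxiliary weighted graph $(\hat{G},\weight)$ of \cref{sec:asymp_dim} instead of $G$, since by \cref{lem:dequal} the two have identical distances on every pair of vertices while the weight-$2$ edges make the nesting structure visible.

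The local ingredient is exactly the mechanism behind \cref{lem:layerable}. Fix a maximal sphere of $\mathcal S$ as the pivot $v$, set $\pi(u):=\dist{G}{v,u}$, and for a sufficiently large constant $c$ consider the overlapping \emph{bands} $A_j:=\pi^{-1}\big([jc,\,(j+3)c]\big)$ for $j\ge 0$; consecutive bands share a wide block of consecutive layers, so that every edge of $(\hat{G},\weight)$ whose endpoints lie in different bands has both endpoints in a common overlap block, and every vertex lies in some band. Let $M_j$ be the set of vertices that are maximal in $G[A_j]$. As observed in the proof of \cref{lem:layerable}, no ball $\ball{S_m}$ with $m\in M_j$ contains another, so two such balls meet exactly when their boundary spheres $S_m$ do; hence $G[M_j]$ is a genuine intersection graph of balls in $\R^d$, and the map $f_j$ sending a vertex $u\in A_j$ to a vertex of $M_j$ containing $u$ is a quasi-isometry from $(\hat{G},\weight)[A_j]$ onto the ball graph $G[M_j]$, with constants depending only on $c$.

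It then remains to glue the ball graphs $G[M_0],G[M_1],\dots$, which are linearly ordered like a path, into a single ball configuration $\mathcal B$ in $\R^d$ whose intersection graph is quasi-isometric to $(\hat{G},\weight)$; I expect this gluing to carry essentially all of the difficulty. The natural attempt is to pick similarities $\sigma_j$ of $\R^d$ placing the balls of $M_j$, scaled far down, inside a thin spherical shell just outside the shell used for $M_{j-1}$, so that distinct shells are essentially disjoint, and then to join consecutive shells by ``bridges'' of boundedly many auxiliary balls. Because $f_j$ is a quasi-isometry, every vertex $u\in A_j$ lies at bounded $(\hat{G},\weight)$-distance from $f_j(u)$; so if $u$ lies in the overlap of $A_j$ and $A_{j+1}$, then its two representatives $f_j(u)$ and $f_{j+1}(u)$ are at bounded distance in $(\hat{G},\weight)$, and the bridges must be arranged so that this remains true in $G(\mathcal B)$. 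The delicate point, and where the real work must lie, is that an overlap block can contain arbitrarily many such vertices $u$, so the bridges cannot be routed independently: one must route them consistently with the containment order inside the block --- the bridge of a deeply nested $u$ running inside the bridge of a shallower one --- so that bridges do not create spurious intersections, which is exactly what forces the shells to be thin and turns this into a genuine geometric construction rather than mere bookkeeping.

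Given such a configuration $\mathcal B$, one would set $f(u):=\sigma_{j(u)}\!\big(\ball{S_{f_{j(u)}(u)}}\big)$ for a choice of band $A_{j(u)}$ containing $u$, and verify the quasi-isometry inequalities with $d$-dependent constants: the upper bound by following a geodesic of $(\hat{G},\weight)$ band by band and paying a bounded cost at each band change, and the lower bound from the fact that a short path in $G(\mathcal B)$ meets only boundedly many consecutive shells and therefore pulls back, through the $f_j$'s and the bridges, to a short walk in $(\hat{G},\weight)$. Since every object produced along the way is already a ball, no separate step is needed to pass from bounded-aspect-ratio convex sets to balls; combined with \cref{lem:QI}, this would reduce \cref{conj:asdim} to bounding the asymptotic dimension of $d$-dimensional ball intersection graphs.
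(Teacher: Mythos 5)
First, note that \cref{conj:qi} is stated in the paper as an open conjecture (only the $d=2$ case is credited to forthcoming separate work), so there is no proof in the paper to compare against; any submission here has to stand on its own as a complete argument, and yours does not. The local part of your plan is sound and genuinely mirrors the machinery of \cref{sec:asymp_dim}: using \cref{lem:dequal} to pass to $(\hat{G},\weight)$, banding by distance from a maximal pivot, and observing that the maximal vertices $M_j$ of a band form an antichain under containment, so that $G[M_j]$ is an honest ball intersection graph and the map $f_j$ is a quasi-isometry with constants depending only on the band width --- this is exactly the content of \cref{lem:layerable} specialised to spheres. But this only shows that each band is quasi-isometric to a ball graph; the conjecture asks for a single ball configuration $\mathcal{B}$ in $\R^d$ quasi-isometric to all of $G$, and that is precisely the step you leave as a ``natural attempt''.

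The gap is not bookkeeping; the gluing you describe is essentially equivalent to the conjecture itself, and as sketched it would fail. The configurations realising $G[M_j]$ and $G[M_{j+1}]$ are geometrically unrelated, so after placing them in consecutive shells via similarities $\sigma_j$, the pairs $\{\sigma_j(\ball{S_{f_j(u)}}),\sigma_{j+1}(\ball{S_{f_{j+1}(u)}})\}$ that must be bridged sit at arbitrary relative positions, and there may be arbitrarily many of them. Two vertices $u,u'$ in the same overlap block can be at enormous distance in $(\hat{G},\weight)$ (the bands have bounded width in the projection $p$, not bounded diameter), so if their bridges come close to each other --- or if a single bridge is forced to be long or to brush past unrelated balls of either shell --- you create shortcuts in $G(\mathcal{B})$ that destroy the lower quasi-isometry bound, and you give no argument that bridges of boundedly many balls, pairwise far apart unless their endpoints are close in $(\hat{G},\weight)$, can be routed simultaneously in $\R^d$ with constants uniform over the whole class $\SphereInt{d}$. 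Nesting-respecting routing ``inside'' shallower bridges does not obviously make sense either, since the bridge balls live between the shells and have no containment relation to the original spheres. Until this simultaneous routing problem is solved (or replaced by a different global construction), the proposal reduces the conjecture to an unproved geometric statement of comparable difficulty rather than proving it.
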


\medskip
Recall that an \emph{induced minor} of a graph is obtained by deleting vertices and contracting edges. Let $\mathrm{Forb_{ind}}(H)$ denote the class of graphs that do not contain a graph $H$ as an induced minor.

\begin{problem} \label{prob forb H}
Do the graphs in $\mathrm{Forb_{ind}}(H)$ have bounded sphere dimension for every finite graph $H$? If not, is each graph in  $\mathrm{Forb_{ind}}(H)$ uniformly quasi-isometric to a graph with bounded sphere dimension?
\end{problem}

The same questions can be asked for minors instead of induced minors. These two versions of \cref{prob forb H} are related via the following problem, which is a special case of \cite[Conjecture~1.1]{GeoPapMin} (which has been disproven \cite{DHIM}, but special cases remain open): 
\begin{problem} \label{prob ind min}
For every finite graph $H$, each graph in $\mathrm{Forb_{ind}}(H)$ is uniformly 
quasi-isometric to a graph with no $H$ minor. 
\end{problem}

\medskip

For $d\ge 3$, it is not possible to replace asymptotic dimension by Assouad--Nagata dimension in \cref{thm asdim}, even for the subclass of ball intersection graphs.
This was observed by Dvořák and Norin~\cite{Dvorak2021SSSconvexshapes}. 
Assouad--Nagata dimension is closely related to a concept in computer science called sparse partition schemes. Roughly speaking, if a graph class has bounded Assouad--Nagata dimension, and the colourings witnessing this can be computed, then certain universal versions of classical optimisation problems such as TSP, Steiner Tree and Set Cover allow approximations of bounded ratio. These approximate solutions are computed using sparse partition schemes; see, for instance,~\cite{bonamy2023asymptotic, JLNRS, BuLaTiSpa}. As mentioned above, graphs of bounded sphere dimension have unbounded Assouad--Nagata dimension, and so we cannot hope for sparse partition schemes. Still, it would be interesting to obtain bounds on these parameters as a function of the size of the graph. Our proof of \cref{thm asdim} utilises \cref{thm:layer}, which has a strengthening preserving boundedness of Assouad--Nagata dimension. Thus, the bottleneck lies in the proof of \cref{thm:convexasdim} in \cite{Dvorak2021SSSconvexshapes}. 

\section*{Acknowledgements}

This research was initiated during the Banff workshop `New Perspectives in Colouring and Structure (24w5272)'. We thank Alex Scott for the construction in \cref{thm not d}. We thank Louis Esperet for several helpful discussions.

\bibliographystyle{plainurl}
\bibliography{literature}

\end{document}